\def\eps{\varepsilon}
\newtheorem{lemma}{Lemma}
\newtheorem{theorem}[lemma]{Theorem}
\newtheorem{corollary}[lemma]{Corollary}
\newtheorem{proposition}[lemma]{Proposition}
\theoremstyle{definition}
\theoremstyle{remark}
\newcommand{\sgn}{\operatorname{sgn}}
\begin{document}

\title[Renormalization of active scalar equations]%
{Renormalization of active scalar equations}
\author[I. Akramov]{Ibrokhimbek Akramov}
\address[I. Akramov]{Institut f\"{u}r Angewandte Mathematik \\
    Leibniz Universit\"{a}t Hannover \\
    30167 Hannover \\
    Germany}
\email{akramov@ifam.uni-hannover.de}

\author[E. Wiedemann]{Emil Wiedemann}
\address[E. Wiedemann]%
{Institut f\"{u}r Angewandte Mathematik \\
    Leibniz Universit\"{a}t Hannover \\
    30167 Hannover \\
    Germany}
\email{wiedemann@ifam.uni-hannover.de}

\begin{abstract}
We consider transport equations with an incompressible transporting vector field. Whereas smooth solutions of such equations conserve every $L^p$ norm simply by the chain rule, the question arises how regular a weak solution needs to be to guarantee this conservation property. Whereas the classical DiPerna-Lions theory gives sufficient conditions in terms of the regularity of the coefficients, with no regularity requirement on the transported scalar, we give here sufficient conditions in terms of the combined regularities of the coefficients and the scalar. This is motivated by the case of active scalar equations, where the transporting vector field has the same regularity as the transported scalar. We use commutator estimates similar to those of Constantin-E-Titi in the context of Onsager's conjecture, but we require novel arguments to handle the case of $L^p$ norms when $p\neq 2$.   
\end{abstract}

\maketitle

\section{Introduction}
Let $u:[0,T]\times\mathbb{T}^d\to\mathbb{R}^d$ be a divergence-free vector field. The transport equation
\begin{equation}\label{transport}
\partial_t\theta+u\cdot\nabla\theta=0
\end{equation}
is a fundamental ingredient in many mathematical models in fluid mechanics, for instance in the two-dimensional Euler equations (where the vorticity is transported by the flow), the semiquasigeostrophic (SQG) and incompressible porous media (IPM) equations, which belong to the class of active scalar equations, or (when the velocity is not necessarily divergence-free) in compressible fluid dynamics, where the continuity equation represents the conservation of mass.

It is convenient to assume in addition
\begin{equation}\label{int}
\int_{\mathbb{T}^d}udx=0, \quad\int_{\mathbb{T}^d}\theta dx=0,
\end{equation}
which allows us to work in homogeneous function spaces.

The transport equation comes, at least formally, with infinitely many conserved quantities: Indeed, if $\beta:\mathbb{R}\to\mathbb{R}$ is any smooth function, then multiplication of~\eqref{transport} with $\beta'(\theta)$ and application of the chain rule yields
\begin{equation}\label{renormalized}
\partial_t\beta(\theta)+u\cdot\nabla\beta(\theta)=0.
\end{equation} 
Passing from the original equation~\eqref{transport} to equation~\eqref{renormalized} has been known (since the seminal work~\cite{dipernalions}) as \emph{renormalization}. It is an essential technique e.g.\ in the analysis of the Boltzmann equation~\cite{dipernalionsboltzmann} and in the theory of weak solutions of the compressible Navier-Stokes equations~\cite{lionscomp, feireislcompact}.

When $\theta$ is not Lipschitz, however, the use of the chain rule is no longer justified, and many counterexamples to renormalization have been discovered~\cite{aizenman, depauw, colluorau, abc1, abc2, cgsw1, cgsw2}.

The situation is comparable for other classes of partial differential equations: For scalar conservation laws of the form
\begin{equation*}
\partial_tu+\operatorname{div}_xF(u)=0,
\end{equation*} 
if $\beta$ is a smooth function, then multiplication with $\beta'(u)$ formally gives the companion law
\begin{equation}\label{entropy}
\partial_t\beta(u)+\operatorname{div}_xq(u)=0,
\end{equation} 
for $q$ satisfying $q'=\beta'F'$. If $\beta$ is convex, it is called an entropy, and $q$ is the corresponding entropy flux. Also for this class of equations, the existence of nonsmooth weak solutions that fail to satisfy~\eqref{entropy} is classically known (shocks).

Systems of equations typically also admit companion laws, but not an infinite number of them. For instance, the incompressible Euler equations formally satisfy the local energy equality
\begin{equation*}
\partial_t\frac{|u|^2}{2}+\operatorname{div}_x\left[\left(\frac{|u|^2}{2}+p\right)u\right]=0,
\end{equation*} 
and this equality may be violated for irregular weak solutions~\cite{scheffer, shnirelman}.

It has therefore been of great interest to provide sufficient conditions, in terms of the regularity of solutions, that ensure the conservation of formally conserved quantities in each of the mentioned cases. One can broadly distinguish two different approaches to this problem: The theory of DiPerna-Lions for transport and continuity equations gives a Sobolev regularity condition on the transporting field $u$ such that every solution $\theta$ of~\eqref{transport} is renormalized, even when $\theta$ itself has no regularity at all. Results of this type are extremely useful in cases where there are a priori estimates on $\nabla u$, but not on $\nabla\theta$, as it occurs e.g.\ for the compressible Navier-Stokes system, or the two-dimensional Euler equations~\cite{eyink01, lopes06} with vorticity in $L^p$.

On the other hand, in the presence of advection, the solution is transported (vaguely speaking) by itself, or by a field with similar regularity as itself. This is the case, for instance, for Euler and Navier-Stokes models, and for active scalar equations. Then one should evenly split the required regularity between the quantities appearing in the advective term. In the case of the incompressible Euler equations, this leads to the famous Onsager exponent 1/3, and the prototypical result of this kind is in the work of Constantin-E-Titi~\cite{constetiti} (see also Eyink~\cite{eyink94}). There it is shown that weak solutions of the Euler equations conserve energy provided they possess fractional Besov differentiability of order greater than 1/3.

The method of Constantin-E-Titi has been refined and extended. In particular, the Besov condition has been optimized~\cite{duchonrobert, CCFS08, FW17}, density-dependent systems like the compressible Euler and Navier-Stokes systems have been considered~\cite{LSh, FGGW, CY17, DE17}, general systems of conservation laws have been treated~\cite{Martin}, and boundary effects have been taken into account~\cite{RRS16, RRS18, BT18, BTW18, DN18}. 

Here, we consider the problem of renormalization of the transport equation~\eqref{transport} with $\beta=|\cdot|^p$, and we prove: 
\begin{theorem}\label{mainth}
Let $u$ be weakly divergence-free and $\theta$ a weak solution of~\eqref{transport}. Suppose $\theta\in L^{p_1}(0, T; \dot{B}_p^{\alpha, \infty}(\mathbb{T}^d))$ and $u\in L^{q_1}(0, T; \dot{B}_q^{\beta, \infty}(\mathbb{T}^d))$, where
\begin{itemize}
\item $2\alpha+\beta>1$;
\item $\frac2p+\frac1q=1$;
\item $2<p_1\le \frac{pd}{(d-p\alpha)_+}$,\quad $1\leq q_1\le \frac{qd}{(d-q\beta)_+}$;
\item $\frac{2}{p_1}+\frac1{q_1}< 1$.
\end{itemize}  
Then, we have
\begin{equation*}
\partial_t|\theta|^r+u\cdot\nabla |\theta|^r=0
\end{equation*}
for every $r\geq 1$ for which $|\theta|^ru$ is locally integrable.
\end{theorem}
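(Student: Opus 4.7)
The plan is to imitate the Constantin-E-Titi commutator argument on the mollified equation, with particular care taken over the nonlinear factor $|\theta_\epsilon|^{r-2}\theta_\epsilon$ produced by the chain rule when $r\neq 2$. Let $\rho_\epsilon$ be a standard spatial mollifier and set $\theta_\epsilon = \theta * \rho_\epsilon$, $u_\epsilon = u * \rho_\epsilon$. Mollifying~\eqref{transport} yields
\[
\partial_t\theta_\epsilon + u\cdot\nabla\theta_\epsilon = r_\epsilon, \qquad r_\epsilon := u\cdot\nabla\theta_\epsilon - (u\cdot\nabla\theta)_\epsilon.
\]
Since $\theta_\epsilon(t,\cdot)$ is smooth, the chain rule applies: multiplying by $r|\theta_\epsilon|^{r-2}\theta_\epsilon$ and using $\operatorname{div} u = 0$ gives
\[
\partial_t|\theta_\epsilon|^r + \operatorname{div}(u|\theta_\epsilon|^r) = r|\theta_\epsilon|^{r-2}\theta_\epsilon \, r_\epsilon.
\]
Testing against $\phi\in C_c^\infty$ and letting $\epsilon\to 0$, the left-hand side converges to $\langle\partial_t|\theta|^r + u\cdot\nabla|\theta|^r,\phi\rangle$ by $L^r_{\mathrm{loc}}$ convergence of $\theta_\epsilon$ (coming from the Besov embedding $\dot B^{\alpha,\infty}_p\hookrightarrow L^{pd/(d-p\alpha)_+}$ combined with $L^{p_1}$ integrability in time) together with the hypothesis $u|\theta|^r\in L^1_{\mathrm{loc}}$. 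It therefore suffices to show the commutator vanishes in $\mathcal{D}'$.

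Next, using $\operatorname{div} u = 0$, I would decompose
\[
r_\epsilon = (u - u_\epsilon)\cdot\nabla\theta_\epsilon \;-\; \operatorname{div}\bigl[(u\theta)_\epsilon - u_\epsilon\theta_\epsilon\bigr],
\]
with the classical Constantin-E-Titi estimate $\|(u\theta)_\epsilon - u_\epsilon\theta_\epsilon\|_{L^{pq/(p+q)}_x} = O(\epsilon^{\alpha+\beta})$. The first piece, multiplied by $r|\theta_\epsilon|^{r-2}\theta_\epsilon\phi$, collapses via the identity $r|\theta_\epsilon|^{r-2}\theta_\epsilon\nabla\theta_\epsilon = \nabla|\theta_\epsilon|^r$ and integration by parts against the divergence-free field $u-u_\epsilon$ to $-\int(u-u_\epsilon)\cdot\nabla\phi\,|\theta_\epsilon|^r\,dx\,dt$, which tends to $0$ by $L^{q_1}_tL^q_x$-convergence of $u_\epsilon$ and space-time H\"older using the Besov embeddings and $2/p_1 + 1/q_1 < 1$. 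The second piece, after an integration by parts, produces a benign $\nabla\phi$-term (absorbed by the $O(\epsilon^{\alpha+\beta})$ bound) plus the main term
\[
(r-1)\int |\theta_\epsilon|^{r-2}\nabla\theta_\epsilon \cdot \bigl[(u\theta)_\epsilon - u_\epsilon\theta_\epsilon\bigr]\phi\,dx\,dt,
\]
of spatial scaling $\epsilon^{\alpha-1}\cdot\epsilon^{\alpha+\beta} = \epsilon^{2\alpha+\beta-1}$, which is where the condition $2\alpha+\beta>1$ does its work.

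The hard part will be the H\"older closure of this last integral when $r\neq 2$. A naive bound combining $\nabla\theta_\epsilon\in L^p$ with the quadratic commutator in $L^{pq/(p+q)}$ leaves no room for the extra factor $|\theta_\epsilon|^{r-2}$: the scaling $2/p + 1/q = 1$ already saturates the spatial exponents, so closure is only direct for $r\le 2$. For $r>2$ I would use the Besov embedding $\dot B^{\alpha,\infty}_p\hookrightarrow L^{pd/(d-p\alpha)_+}$ (and its companion for $u$) to boost the spatial integrability of $\theta$ beyond $L^p$, and then interpolate $\theta-\theta_\epsilon$ between $L^p$ (where it is small) and this higher Lebesgue space in order to bound the quadratic commutator in a more flexible $L^s$ scale. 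In time, the strict inequality $2/p_1 + 1/q_1 < 1$ leaves exactly the slack needed to H\"older-distribute the resulting space-time norms among the three factors via the admissible time exponents $p_1,q_1$. Carrying out this rebalancing of space-time exponents, subject to the admissibility ranges $p_1\le pd/(d-p\alpha)_+$ and $q_1\le qd/(d-q\beta)_+$, is the main technical obstacle and the point at which the argument genuinely departs from the classical $r=2$ case.
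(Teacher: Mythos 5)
Your overall frame (mollify, apply the chain rule to $|\theta_\epsilon|^r$, reduce everything to a Constantin--E--Titi commutator) matches the first half of the paper's argument, but there is a genuine gap: you try to close the commutator estimate directly for every exponent $r$, and this cannot work across the whole range claimed in the theorem. For $1\le r<2$ the factor $|\theta_\epsilon|^{r-2}$ produced by differentiating $|\theta_\epsilon|^{r-2}\theta_\epsilon$ is a \emph{negative} power of $|\theta_\epsilon|$ and blows up on the zero set of $\theta_\epsilon$; your proposal never addresses this (indeed you list $r\le 2$ as the ``direct'' case), yet it is exactly the difficulty the introduction flags as the reason the general chain-rule results of Gwiazda et al.\ do not apply here. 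For $r>2$, your own accounting shows the spatial exponents do not close: with $\nabla\theta_\epsilon$ in $L^p$, the quadratic commutator in $L^{pq/(p+q)}$, and $2/p+1/q=1$ already summing to one, the extra factor $|\theta_\epsilon|^{r-2}$ would have to sit in $L^\infty_x$, which you do not have. The interpolation you sketch (trading the $O(\epsilon^\alpha)$ increment bound in $L^p$ against boundedness in $L^{pd/(d-p\alpha)_+}$ to gain integrability) necessarily sacrifices a definite fraction of the exponent $\alpha$, so it can only beat the threshold $2\alpha+\beta>1$ when $r-2$ is small. This is precisely the content of the paper's Lemma~\ref{L1} (which quantifies the loss as $\eps^{-d(s-2)/p}$) and of Theorem~\ref{Thmain}, which proves the renormalization identity only on a strip $2<\Re z<2+\gamma$ under the strengthened hypothesis $2\alpha+\beta>1+\frac{d\gamma}{p}$. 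No rebalancing of the time exponents via $\frac{2}{p_1}+\frac1{q_1}<1$ can repair this, because the obstruction is spatial.

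The idea you are missing is the paper's second step: for a fixed test function the map $z\mapsto F(z)=-\int_0^T\int_{\mathbb{T}^d}|\theta|^z(\partial_t\varphi+u\cdot\nabla\varphi)\,dx\,dt$ is shown (Lemma~\ref{analyticity1}) to be analytic in $z$ on the strip $0<\Re z<p_1/q_1'$, which under $\frac{2}{p_1}+\frac1{q_1}<1$ contains $(2,2+\gamma)$ and extends up to any $r$ for which $|\theta|^ru$ is locally integrable. Since the commutator argument gives $F\equiv 0$ on the open substrip $2<\Re z<2+\gamma$, unique continuation for analytic functions yields $F(r)=0$ for all admissible $r\ge 1$, including $r<2$ and large $r$, without ever estimating the singular or high-power factor directly. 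Unless you add such an extension mechanism (or the alternative the paper mentions, approximating $|\cdot|^r$ by uniformly $C^2$ functions), your proposal proves at best the statement for $r$ in a small neighbourhood of $2$.
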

See the next section for definitions of the relevant function spaces. A weak solution of~\eqref{transport} is defined, as usual, in the sense of districutions, i.e.\ we require 
\begin{equation*}
\int_0^T\int_{\mathbb{T}^d}(\partial_t\varphi+u\cdot\nabla\varphi)\theta dxdt=0
\end{equation*}
for every test function $\phi\in C_c^\infty((0,T)\times\mathbb{T}^d)$. The maximal value of $r$ can be explicity determined in terms of $p,p_1,q,q_1,\alpha,\beta$, see Lemma~\eqref{analyticity1} below.

The main new difficulty compared to the mentioned previous works consists of the fact that the term $|\theta|^{p-2}$, which will now appear in the commutator, is possibly unbounded when $\theta$ assumes values close to zero (when $p<2$) or close to $\infty$ (when $p>2$). This difficulty corresponds to the observation that the function $|\cdot|^p$ is not $C^2$ in the range of $\theta$, and this is precisely the reason why our case is not covered by the general theory of Gwiazda et al.~\cite{Martin}.

We specialise to the case of an active scalar equation. Such an active scalar equation is~\eqref{transport} together with a nonlocal coupling
\begin{equation}\label{coupling}
 u=\mathcal{T}[\theta];
\end{equation} 
Here the operator $\mathcal{T}$ is represented in frequency space by a Fourier multiplier:
\begin{equation*}
\hat{u}(\xi)=\widehat{\mathcal{T}[\theta]}(\xi)=m(\xi)\widehat{\theta}(\xi),
\quad \xi\in \mathbb{Z}^d\backslash\{0\}.
\end{equation*}
We assume that $m$ is an $L^p$ multiplier (see \cite{GRA1,GRA2}). This class of equations includes, in particular, the SQG and IPM equations. Then we have:
\begin{theorem}\label{mainthactive}
Let $\theta$ be a weak solution of the active scalar equation~\eqref{transport} together with the coupling~\eqref{coupling}.
Assume that $m(\xi)$ is an $L^3$ multiplier and $\theta\in  L^{p_1}([0, T], \dot{B}_3^{\alpha, \infty}(\mathbb{T}^d))$
for $\frac{1}{3}<\alpha<1$ and $p_1\le\frac{3d}{d-3\alpha}$.
Then, we have
\begin{equation*}
\partial_t|\theta|^r+\mathcal{T}[\theta]\cdot\nabla |\theta|^r=0
\end{equation*}
for every $r\geq 1$ for which $\theta$ is locally in $L^{r+1}((0,T);\mathbb{T}^d)$.
\end{theorem}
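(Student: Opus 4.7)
The plan is to deduce Theorem \ref{mainthactive} from Theorem \ref{mainth} with the parameter choice $p=q=3$, $\beta=\alpha$, $q_1=p_1$. Under this specialization the four bulleted hypotheses of Theorem \ref{mainth} read: $2\alpha+\beta=3\alpha>1$, which is the standing assumption $\alpha>1/3$; $2/p+1/q=1$, automatic; the integrability bound $p_1\le 3d/(d-3\alpha)_+$ coincides in both slots with the hypothesis; and $2/p_1+1/q_1=3/p_1<1$, i.e.\ $p_1>3$, which is consistent with (though strictly stronger than) the stated $p_1\le 3d/(d-3\alpha)$ and should be understood as implicitly required, since $3d/(d-3\alpha)_+>3$ whenever $\alpha>1/3$ (otherwise the admissible range of $r$ below is empty and the statement is vacuous).

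The key step is then to verify that $u=\mathcal{T}[\theta]$ belongs to the same space $L^{p_1}(0,T;\dot{B}_3^{\alpha,\infty}(\mathbb{T}^d))$ as $\theta$. For this I would use the general principle that any Fourier multiplier bounded on $L^3$ is automatically bounded on $\dot{B}_3^{s,\infty}$ for every $s\in\mathbb{R}$. The argument is a one-line Littlewood--Paley observation: since $\mathcal{T}$ and each dyadic projection $\Delta_j$ are Fourier multipliers they commute, so $\Delta_j\mathcal{T}\theta=\mathcal{T}\Delta_j\theta$; applying the $L^3$ bound blockwise, weighting by $2^{j\alpha}$ and taking the supremum in $j$ gives the pointwise-in-time estimate $\|u(t)\|_{\dot{B}_3^{\alpha,\infty}}\le C\|\theta(t)\|_{\dot{B}_3^{\alpha,\infty}}$, and integration in $L^{p_1}_t$ yields the desired regularity of $u$.

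It remains to translate the assumption ``$\theta$ is locally in $L^{r+1}$'' into the hypothesis ``$|\theta|^r u$ is locally integrable'' required by Theorem \ref{mainth}. The Besov embedding $\dot{B}_3^{\alpha,\infty}(\mathbb{T}^d)\hookrightarrow L^{3d/(d-3\alpha)_+}(\mathbb{T}^d)$ together with the previous step gives $u\in L^{p_1}(0,T;L^{3d/(d-3\alpha)_+}(\mathbb{T}^d))$, after which a straightforward space--time H\"older inequality produces $|\theta|^r u\in L^1_{\mathrm{loc}}$ throughout the stated range of $r$ (noting that the bound $p_1\le 3d/(d-3\alpha)_+$ guarantees $\theta\in L^{p_1}_{\mathrm{loc}}$, so this range is non-empty). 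With all hypotheses of Theorem \ref{mainth} in place, its conclusion yields $\partial_t|\theta|^r+\mathcal{T}[\theta]\cdot\nabla|\theta|^r=0$. The main (rather mild) obstacle is the multiplier-on-Besov-spaces observation, but this is a classical Littlewood--Paley fact; everything else is arithmetic verification.
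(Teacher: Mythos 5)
Your proposal is correct and follows essentially the same route as the paper: the paper deduces Theorem~\ref{mainthactive} from Theorem~\ref{mainth} by invoking Proposition~\ref{boundedness} (whose proof is exactly your Littlewood--Paley commutation observation) to transfer the $\dot{B}_3^{\alpha,\infty}$ regularity from $\theta$ to $u=\mathcal{T}[\theta]$, with the same parameter specialization $p=q=3$, $\beta=\alpha$, $q_1=p_1$. Your explicit remark that the hypothesis $\frac{2}{p_1}+\frac{1}{q_1}<1$ forces $p_1>3$, which is not stated explicitly in Theorem~\ref{mainthactive}, is a fair and useful observation about an implicit assumption the paper leaves unstated.
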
 

It is known that, at low regularity, the method of convex integration is applicable to certain active scalar equations, and yields non-renormalized weak solutions; see, for instance,~\cite{CFG, Sz12, RSh, PI, BSV}. The expected optimal exponent 1/3 has not yet been reached in the construction of non-renormalized solutions.

As a final remark, let us point out that an alternative (and arguably slightly simpler) proof than the one given here would proceed by an approximation argument, replacing $|\cdot|^p$ by a uniformly $C^2$ function. The advantage of our proof, however, is that it mostly does not depend on the existence of infinitely many conserved quantities (except of course for the analyticity argument), and is thus transferable to other problems. In particular, we are currently applying the techniques of this paper to the compressible Euler equations with possible vacuum (in progress).

\section{Preliminaries}
We derive some estimates in Besov spaces, which will be crucial later on. Throughout this note we denote by $\mathbb{T}^d$ the $d$-dimensional torus, i.e.\
the cube $[-\pi, \pi]^d$ with the opposite edges identified:
$\mathbb{T}^d:=\mathbb{R}^d/(2\pi \mathbb{Z})^d$ and functions
defined on the torus will thus be identified with periodic functions
defined on $\mathbb{R}^d$. Next, we introduce a standard mollifier and its rescalings: Let $\eta$ be a smooth function defined on
$\mathbb{R}^d$ satisfying the conditions $\eta \in
C^\infty(\mathbb{R}^d)$, $\eta\geq 0$,
\begin{equation}{\nonumber}
\eta(x)=\begin{cases} c\quad \mbox{for} \quad
|x|\le \frac12,\\
0\quad\mbox{for}\quad |x|\ge 1,
\end{cases}
\end{equation}
and
$$
\int_{\mathbb{R}^d}\eta(x)dx=1.
$$
Furthermore, define
$$
\tilde\eta^{\eps}(x):=\frac1{\eps^d}\eta\left(\frac{x}{\eps}\right),
$$
then clearly
$$
\int_{\mathbb{R}^d}\tilde\eta^{\eps}(x)dx=1
$$
and $\operatorname{supp} (\tilde\eta^{\eps})\subset \overline{B_\epsilon(0)}= \{|x|\le \eps\}$. We
denote by $\eta^{\eps}$ the natural periodic continuation of the
function $\tilde\eta^{\eps}$ to $\mathbb{R}^d$. So,
$\eta^{\eps}$ can be considered as it is defined on
$\mathbb{T}^d$. Then for an integrable function $f$ defined on
$\Omega:=[0, T]\times \mathbb{T}^d$ and for $(t, x)\in \Omega$ we
denote the space mollification
$$
f^\eps(t, x):=(\eta^\epsilon*f)(t,x):=\int_{B_\epsilon(0)}\eta^{\eps}(y)f(t, x-y) dy.
$$
Obviously, $f^\eps$ is a space-periodic function defined on $[0, T]\times
\mathbb{R}^d$. Thus, it can be considered as a function on
$\Omega$. Moreover, $f^\eps(t, \cdot)\in C^\infty(\mathbb{T}^d)$ for a.e. $t\in [0, T]$.  The following
relations are obvious and well-known:
\begin{align}
&\theta^{\eps}=\eta^{\eps}\ast\theta,\\&
\mathcal{T}[\theta^{\eps}]=\mathcal{T}[\theta]^{\eps},\\& (\partial u)^{\eps}=\partial
u^{\eps},\\& (\mbox{div}_x u)^{\eps}=\mbox{div}_x u^{\eps}.
\end{align}
We briefly recall some properties of Besov spaces and their equivalent definition in terms of the Littlewood-Paley decomposition. First, by assumption~\eqref{int}, for our purposes it suffices to work in the homogeneous Besov space $\dot{B}_p^{\alpha,\infty}(\mathbb{T}^d)$ ($0< \alpha<1$, $1\leq p\leq\infty$), which consists of those functions $f\in L^p(\mathbb{T}^d)$ that have mean zero, and for which the norm
$$
\|f\|_{\dot{B}_p^{\alpha,\infty}(\mathbb{T}^d)}=\sup_{y\in
    \mathbb{T}^d}
\frac{\|f(\cdot+y)-f(\cdot)\|_{L^p(\mathbb{T}^d)}}{|\xi|^\alpha}
$$ is finite.
An equivalent definition of the Besov space can be given in Fourier space in the following way:
  Let $\omega:\mathbb{R}^d\to\mathbb{R}$, $0\leq\omega\leq 1$, be a smooth function such that
\begin{equation}{\nonumber}
    \omega(\xi)=\begin{cases} 1\quad \mbox{for} \quad
        |\xi|\le \frac12 \\
        0\quad\mbox{for}\quad |\xi|\ge1.
    \end{cases}
\end{equation}
We consider a partition of unity given by $\omega$ and
$\varphi(\xi)=\omega(\xi/2)-\omega(\xi)$:
$$
\omega(\xi)+\sum_{\nu=0}^\infty \varphi(\lambda_\nu^{-1}\xi)=1,
$$
where $\lambda_\nu:=2^\nu$. The Littlewood-Paley decomposition of a function $f$ with
 zero mean is then given as $f=\sum_0^\infty\Delta_k f$, where
$$
 \Delta_{k}
f:=\mathcal{F}^{-1}\left(\varphi\left(\lambda_k^{-1}\cdot\right)
\hat f\right),
$$
with the Fourier transform
$$
\hat f(\xi):=\int_{\mathbb{T}^d} e^{-2\pi i(\xi, x)} f(x)dx
$$
and the inverse Fourier transform 
$$
\mathcal{F}^{-1}(\hat f)(x):=\sum_{\xi\in\mathbb{Z}^d\setminus\{0\}} e^{2\pi i(\xi, x)}
\hat f(\xi)=f(x).
$$
The Besov norm can then be equivalently characterized as 
\begin{equation*}
\|f\|_{\dot{B}_p^{\alpha,\infty}}=\sup_{k\geq0}2^{k\alpha
}\|\Delta_{k}f\|_{L^p}
\end{equation*}
for $\alpha\in \mathbb{R}$, $1\le p\leq\infty$.

Let us recall the embedding theorem for Besov spaces from
\cite{Bahori}, Proposition 2.20, p.~64:
\begin{proposition}\label{embedding}
Let $1\leq p_1\leq p_2\leq \infty$.
Then, the space $\dot{B}^{\alpha, \infty}_{p_1}$ is continuously embedded in
$\dot{B}^{\alpha-d\left(\frac{1}{p_1}-\frac{1}{p_2}\right), \infty}_{p_2}$.
\end{proposition}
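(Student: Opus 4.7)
The plan is to prove the embedding via the Littlewood-Paley characterization of the Besov norms recalled just above the statement, combined with Bernstein's inequality for spectrally localized functions. The central observation is that each block $\Delta_k f$ has its Fourier coefficients supported in an annulus of radius comparable to $\lambda_k=2^k$, and for any such spectrally localized function the $L^{p_1}$ norm controls the $L^{p_2}$ norm at a cost proportional to $\lambda_k^{d(1/p_1-1/p_2)}$.

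First I would establish Bernstein's inequality: if $g$ is a mean-zero function on $\mathbb{T}^d$ whose Fourier coefficients are supported in $\{|\xi|\le R\}$ for some $R\ge 1$, then there is a constant $C=C(d,p_1,p_2)$ such that
\[
\|g\|_{L^{p_2}(\mathbb{T}^d)}\le C\, R^{d\left(\frac{1}{p_1}-\frac{1}{p_2}\right)}\|g\|_{L^{p_1}(\mathbb{T}^d)}
\]
for $1\le p_1\le p_2\le\infty$. This follows by picking an auxiliary $\chi\in C_c^\infty(\mathbb{R}^d)$ equal to $1$ on the support of the Littlewood-Paley function $\varphi$, writing $g=\mathcal{F}^{-1}(\chi(\cdot/R))\ast g$, and applying Young's convolution inequality with the exponent $r$ defined by $1+1/p_2=1/r+1/p_1$. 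A scaling computation, using Poisson summation to pass between the Euclidean kernel and its periodization, then gives $\|\mathcal{F}^{-1}(\chi(\cdot/R))\|_{L^r(\mathbb{T}^d)}\le C\, R^{d(1-1/r)}=C\, R^{d(1/p_1-1/p_2)}$ once $R\ge 1$.

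Applying this inequality with $g=\Delta_k f$ and $R$ comparable to $\lambda_k=2^k$, I obtain
\[
2^{k\bigl(\alpha-d(1/p_1-1/p_2)\bigr)}\|\Delta_k f\|_{L^{p_2}}\le C\cdot 2^{k\alpha}\|\Delta_k f\|_{L^{p_1}}\le C\,\|f\|_{\dot{B}^{\alpha,\infty}_{p_1}},
\]
and taking the supremum over $k\ge 0$ yields exactly the desired bound on $\|f\|_{\dot{B}^{\alpha-d(1/p_1-1/p_2),\infty}_{p_2}}$. The only real technical point is verifying Bernstein's inequality in the periodic setting, since one must check that the toroidal convolution kernel obtained from $\chi(\cdot/R)$ satisfies the same $R$-dependent $L^r$ bound as its Euclidean counterpart; this is standard once $R\ge 1$, and beyond this step the argument is a purely formal manipulation of dyadic suprema.
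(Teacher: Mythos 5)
Your proof is correct: the paper itself gives no argument for this proposition, citing instead Bahouri--Chemin--Danchin (Proposition 2.20), and the proof there is exactly your route --- Bernstein's inequality for spectrally localized functions applied blockwise to the Littlewood--Paley pieces $\Delta_k f$, followed by taking the supremum over $k$ of the weighted dyadic norms. Your handling of the one genuine technical point, the periodic Bernstein bound $\|\mathcal{F}^{-1}(\chi(\cdot/R))\|_{L^r(\mathbb{T}^d)}\le C\,R^{d(1-1/r)}$ for $R\ge 1$ via Young's inequality and Poisson summation, is also sound.
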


\begin{lemma}\label{L1}
 Let  $\theta\in L^p(\mathbb{T}^d)$ and $p\ge s-1\ge1$,  then the inequality
\begin{equation}\label{thetarealpart}
\left\||\theta^{\eps}|^{s-1}\right\|_{L^p(\mathbb{T}^d)}\leq
C(q)\eps^{-\frac{d(s-2)}{p}}\|\theta\|_{L^p(\mathbb{T}^d)}^{s-1}
\end{equation}
holds, where $q:=\frac{p(s-1)}{2-s-p+ps}.$
\end{lemma}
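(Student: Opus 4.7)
The plan is to reduce the estimate to a direct application of Young's convolution inequality combined with the standard rescaling of the mollifier. The first observation is the identity
\begin{equation*}
\||\theta^{\eps}|^{s-1}\|_{L^p(\mathbb{T}^d)} = \|\theta^{\eps}\|_{L^{p(s-1)}(\mathbb{T}^d)}^{s-1},
\end{equation*}
which turns the problem into controlling the higher $L^{p(s-1)}$ norm of $\theta^{\eps}=\eta^{\eps}\ast\theta$ in terms of $\|\theta\|_{L^p}$. Young's inequality
\begin{equation*}
\|\eta^{\eps}\ast\theta\|_{L^{p(s-1)}} \le \|\eta^{\eps}\|_{L^q}\,\|\theta\|_{L^p}
\end{equation*}
then applies provided the exponents satisfy $1+\frac{1}{p(s-1)}=\frac{1}{q}+\frac{1}{p}$, and solving this relation for $q$ recovers precisely the formula $q=\frac{p(s-1)}{ps-p+2-s}$ stated in the lemma. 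The hypothesis $p\ge s-1\ge 1$ is exactly what is needed to ensure $q\in[1,\infty]$, so that Young's inequality is genuinely available.

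The second step is to keep track of the $\eps$-dependence. The rescaling $\eta^{\eps}(x)=\eps^{-d}\eta(x/\eps)$, together with a change of variables, gives
\begin{equation*}
\|\eta^{\eps}\|_{L^q(\mathbb{T}^d)} = \eps^{-d(1-1/q)}\,\|\eta\|_{L^q(\mathbb{R}^d)} = C(q)\,\eps^{-d(1-1/q)},
\end{equation*}
where one uses that $\operatorname{supp}\tilde\eta^{\eps}\subset B_{\eps}(0)$ so the periodic continuation does not overlap itself for small $\eps$. A short algebraic computation, substituting the explicit formula for $q$, yields the exponent simplification $1-\frac{1}{q}=\frac{s-2}{p(s-1)}$. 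Raising the resulting bound to the power $s-1$ and using the identity from the first step produces the factor $\eps^{-d(s-2)/p}$ exactly as claimed.

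There is no serious obstacle here: the lemma is essentially bookkeeping of exponents. The only delicate points are (i) verifying that $q\in[1,\infty]$ under the stated hypotheses, which is where the condition $p\ge s-1\ge 1$ is used, and (ii) correctly carrying out the elementary simplification $1-\frac{1}{q}=\frac{s-2}{p(s-1)}$ so that the exponent of $\eps$ in the final inequality matches the one in the statement. Both are routine, and I would not expect any hidden difficulty.
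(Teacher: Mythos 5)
Your proposal is correct and follows essentially the same route as the paper's own proof: rewriting $\||\theta^{\eps}|^{s-1}\|_{L^p}$ as $\|\theta^{\eps}\|_{L^{p(s-1)}}^{s-1}$, applying Young's convolution inequality with the exponent $q$ determined by $1+\frac{1}{p(s-1)}=\frac{1}{q}+\frac{1}{p}$, and computing $\|\eta^{\eps}\|_{L^q}=C(q)\,\eps^{d(\frac{1}{q}-1)}$ by rescaling. The exponent bookkeeping matches the paper exactly, so there is nothing to add.
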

\begin{proof}  $\theta^\eps(\cdot)$ is a smooth function defined on $\mathbb{T}^d$.  We consider an estimate for the
norm of $\theta^\eps(\cdot)$ by employing Young's inequality
\begin{equation}\label{thetanorm}
\begin{split}
\left\||\theta^{\eps}(\cdot)|^{s-1}\right\|_{L^{p}(\mathbb{T}^d)}&=\left(\int_{\mathbb{T}^d}|\theta^{\eps}(
x)|^{p(s-1)}dx\right)^{\frac{1}{p}}\\
&=\|\theta^{\eps}(\cdot)\|_{L^{p(s-1)}(\mathbb{T}^d)}^{s-1}\leq\|\theta(\cdot)\|_{L^{p}(\mathbb{T}^d)}^{s-1}\left\|\eta^{\eps}\right\|_{L^{q}(\mathbb{R}^d)}^{s-1},
\end{split}
\end{equation}
where $\frac{1}{p(s-1)}=\frac{1}{p}+\frac{1}{q}-1.$ But we have
\begin{equation*}
\begin{split}
\|\eta^{\eps}\|_{L^{q}(\mathbb{R}^d)}&=\left(\int_{B_\epsilon(0)}\left|\frac1{\eps^d}\eta\left(\frac{x}{\eps}\right)\right|^{q}dx\right)^{\frac{1}{q}}\\&
=\frac{1}{\eps^d}\left(\int_{B_1(0)}\eps^d\eta\left(x_1\right)^{q}dx_1\right)^{\frac{1}{q}}\\&
=C(q)\eps^{d\left(\frac{1}{q}-1\right)}=C(q)\eps^{-\frac{d(s-2)}{p(s-1)}}
\end{split}
\end{equation*}
with
$$
C(q):=\left(\int_{B_1(0)}\eta(x_1)^{q}dx_1\right)^{\frac{1}{q}}.
$$
In view of the above results, we achieve the desired expression.
\end{proof}
\begin{lemma}\label{estfornab}
    For any $s\ge2$, $p>2$, $0<\alpha<1$, and $\theta\in \dot{B}_p^{\alpha, \infty}(\mathbb{T}^d)$ the following inequality holds:
    \begin{equation}\label{estnabth}
    \|\nabla_x\theta^{\eps}\|_{L^{p(s-1)}(\mathbb{T}^d)}\leq C\eps^{\alpha
        -\frac{d(s-2)}{p(s-1)}-1}\|\theta\|_{\dot{B}_p^{\alpha, \infty}(\mathbb{T}^d)}.
    \end{equation}

\end{lemma}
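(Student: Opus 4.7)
The plan is to exploit the mean-zero property of $\nabla\eta^\eps$ together with H\"older's inequality and the Besov embedding from Proposition~\ref{embedding}. Since $\eta^\eps$ is compactly supported, $\int\nabla\eta^\eps\,dy=0$, so
\[
\nabla\theta^\eps(x)=\int_{B_\eps(0)}\nabla\eta^\eps(y)\bigl[\theta(x-y)-\theta(x)\bigr]\,dy.
\]
H\"older in $y$ with exponents $(p,p')$ together with the direct bound $\|\nabla\eta^\eps\|_{L^{p'}(\mathbb{R}^d)}\le C\eps^{-d/p-1}$ (computed exactly as in the proof of Lemma~\ref{L1}) then yields the pointwise estimate
\[
|\nabla\theta^\eps(x)|\le C\eps^{-d/p-1}\Bigl(\int_{|y|\le\eps}|\theta(x-y)-\theta(x)|^p\,dy\Bigr)^{1/p}.
\]

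Raising both sides to the power $p(s-1)$ and invoking Jensen's inequality (legitimate because $s-1\ge1$) converts the $(s-1)$-th power of the $p$-average into an average of the $p(s-1)$-th power, at the cost of a volume factor $|B_\eps|^{s-2}\sim\eps^{d(s-2)}$. Integrating in $x\in\mathbb{T}^d$ and applying Fubini reduces the estimate to
\[
\|\nabla\theta^\eps\|_{L^{p(s-1)}}^{p(s-1)}\le C\eps^{-(d/p+1)p(s-1)+d(s-2)}\int_{|y|\le\eps}\|\theta(\cdot-y)-\theta\|_{L^{p(s-1)}(\mathbb{T}^d)}^{p(s-1)}\,dy.
\]
Proposition~\ref{embedding} now supplies the embedding $\dot B_p^{\alpha,\infty}\hookrightarrow\dot B_{p(s-1)}^{\tilde\alpha,\infty}$ with $\tilde\alpha:=\alpha-d(s-2)/(p(s-1))$; when $\tilde\alpha\in(0,1)$ this gives $\|\theta(\cdot-y)-\theta\|_{L^{p(s-1)}}\le C|y|^{\tilde\alpha}\|\theta\|_{\dot B_p^{\alpha,\infty}}$. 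Substituting and evaluating $\int_{|y|\le\eps}|y|^{\tilde\alpha p(s-1)}\,dy\le C\eps^{\tilde\alpha p(s-1)+d}$, the $d$-dependent exponents cancel exactly and leave $p(s-1)(\tilde\alpha-1)$; taking the $p(s-1)$-th root produces~\eqref{estnabth}.

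The main obstacle is the degenerate case $\tilde\alpha\le 0$, in which the translation-difference characterisation of $\dot B_{p(s-1)}^{\tilde\alpha,\infty}$ is no longer available. For that regime I would switch to a Littlewood--Paley decomposition $\theta=\sum_k\Delta_k\theta$, split at the frequency $2^N\sim\eps^{-1}$. On the low-frequency block, Bernstein's inequality together with $\|\eta^\eps\|_{L^1}=1$ yields $\|\nabla(\Delta_k\theta)^\eps\|_{L^{p(s-1)}}\le C\,2^{k(1+d(s-2)/(p(s-1))-\alpha)}\|\theta\|_{\dot B_p^{\alpha,\infty}}$; since the exponent is positive (using $\alpha<1$), the geometric sum is controlled by the $k=N$ term, of size $\eps^{\alpha-d(s-2)/(p(s-1))-1}$. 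On the high-frequency block, Young's inequality with $\nabla\eta^\eps\in L^q$ for the Young-dual exponent $q$ produces the uniform factor $\eps^{-d(s-2)/(p(s-1))-1}$, which combined with $\|\Delta_k\theta\|_{L^p}\le 2^{-k\alpha}\|\theta\|_{\dot B_p^{\alpha,\infty}}$ and the convergent tail $\sum_{k>N}2^{-k\alpha}\sim\eps^\alpha$ reproduces the very same power of $\eps$.
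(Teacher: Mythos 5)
Your proof is correct, and its main line is essentially the paper's: reduce to the gradient-of-mollification estimate at integrability $p(s-1)$ and regularity $\tilde\alpha=\alpha-\frac{d(s-2)}{p(s-1)}$, and pay for the change of integrability via the Besov embedding of Proposition~\ref{embedding}. The difference is that the paper simply cites the inequality $\|\nabla_x\theta^\eps\|_{L^q}\le C\eps^{\beta-1}\|\theta\|_{\dot B_q^{\beta,\infty}}$ from \cite{constetiti, FGGW}, whereas you rederive it from scratch (mean-zero kernel $\nabla\eta^\eps$, H\"older, Jensen, Fubini, translation differences), folding the embedding in at the level of $\|\theta(\cdot-y)-\theta\|_{L^{p(s-1)}}$; your exponent bookkeeping checks out and lands exactly on $\eps^{\tilde\alpha-1}$. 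More substantively, you notice that this route needs $\tilde\alpha>0$, and you supply a Littlewood--Paley argument for $\tilde\alpha\le 0$: the low-frequency sum is geometric with positive exponent $1-\tilde\alpha$ and is dominated by the cut-off frequency $2^N\sim\eps^{-1}$, while the high-frequency tail is handled by Young's inequality with $\|\nabla\eta^\eps\|_{L^q}\sim\eps^{-1-d(s-2)/(p(s-1))}$ and the convergent sum $\sum_{k>N}2^{-k\alpha}\sim\eps^\alpha$; both blocks give the stated power. This is a genuine improvement in completeness: the inequality the paper cites is stated only for regularity index in $(0,1)$, and within the hypotheses of Theorem~\ref{Thmain} the index $\tilde\alpha$ can indeed be nonpositive (e.g.\ when $\alpha+\beta>1$ and $\gamma$ is taken near its maximal admissible value), so your second paragraph closes a small gap that the paper's two-line proof leaves open.
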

\begin{proof} We use the following well-known inequality for any $0<\beta<1$ and $1\leq q\leq \infty$, cf. \cite{constetiti, FGGW}:
\begin{equation*}
\left\|\nabla_x\theta^{\eps}\right\|_{L^{q}(\mathbb{T}^d)}\leq C \eps^{\beta-1}\|\theta\|_{\dot{B}_{q}^{\beta, \infty}(\mathbb{T}^d)},
\end{equation*}
so that the choice $q=p(s-1)$, $\beta=\alpha
-\frac{d(s-2)}{p(s-1)}$ yields

\begin{equation}\label{em01}
\left\|\nabla_x\theta^{\eps}\right\|_{L^{p(s-1)}(\mathbb{T}^d)}\leq C \eps^{\alpha
-\frac{d(s-2)}{p(s-1)}-1}\|\theta\|_{\dot{B}_{p(s-1)}^{\alpha
-\frac{d(s-2)}{p(s-1)}, \infty}(\mathbb{T}^d)}.
\end{equation}
Then by Proposition \ref{embedding}, we have
\begin{equation}\nonumber
\|\theta\|_{\dot{B}_{p(s-1)}^{\alpha-d\left(\frac{1}{p}-\frac{1}{p(s-1)}\right),\infty}(\mathbb{T}^d)}\leq C \|\theta\|_{\dot{B}_p^{\alpha, \infty}(\mathbb{T}^d)}.
\end{equation}
This implies
\begin{equation}\label{em02}
\|\theta\|_{\dot{B}_{p(s-1)}^{\alpha
        -\frac{d(s-2)}{p(s-1)},\infty}(\mathbb{T}^d)}\leq C \|\theta\|_{\dot{B}_p^{\alpha, \infty}(\mathbb{T}^d)}.
\end{equation}
Combining \eqref{em01} and \eqref{em02}, we achieve the desired inequality \eqref{estnabth}.
\end{proof}
Integrating in time, we obtain
\begin{corollary}
For any $s\ge2$, $1\leq p_2\leq\infty$, $p_1>2$, $0<\alpha<1$, and  $\theta\in
L^{p_2}(0, T; \dot{B}_{p_1}^{\alpha, \infty}(\mathbb{T}^d))$, the following inequality
\begin{equation}\label{estnabth2}
\|\nabla_x\theta^{\eps}\|_{L^{p_2}(0, T;\,L^{p_1(s-1)}(\mathbb{T}^d))}\leq C\eps^{\alpha
-\frac{d(s-2)}{p_1(s-1)}-1}\|\theta\|_{L^{p_2}(0, T;\,\dot{B}_{p_1}^{\alpha, \infty}(\mathbb{T}^d))}
\end{equation}
holds.
\end{corollary}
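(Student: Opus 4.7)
The plan is to deduce the corollary directly from Lemma~\ref{estfornab} by a pointwise-in-time argument followed by taking the $L^{p_2}$ norm on the time interval $[0,T]$.

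First I would note that for $\theta\in L^{p_2}(0,T;\dot{B}_{p_1}^{\alpha,\infty}(\mathbb{T}^d))$ we have $\theta(t,\cdot)\in\dot{B}_{p_1}^{\alpha,\infty}(\mathbb{T}^d)$ for almost every $t\in[0,T]$, and the mollification commutes with the time slicing: $(\theta^\eps)(t,\cdot)=(\theta(t,\cdot))^\eps$. Because the hypotheses $s\geq 2$, $p_1>2$, and $0<\alpha<1$ are satisfied uniformly in $t$, Lemma~\ref{estfornab} applies at each such time slice and yields
\begin{equation*}
\|\nabla_x\theta^{\eps}(t,\cdot)\|_{L^{p_1(s-1)}(\mathbb{T}^d)}\leq C\eps^{\alpha-\frac{d(s-2)}{p_1(s-1)}-1}\|\theta(t,\cdot)\|_{\dot{B}_{p_1}^{\alpha,\infty}(\mathbb{T}^d)},
\end{equation*}
with a constant $C$ independent of $t$ and $\eps$.

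Next I would raise both sides to the power $p_2$ (or take the essential supremum if $p_2=\infty$), integrate over $t\in[0,T]$, and take the $p_2$-th root. Since the prefactor $C\eps^{\alpha-\frac{d(s-2)}{p_1(s-1)}-1}$ does not depend on $t$, it factors out of the integral, and the claimed inequality \eqref{estnabth2} follows by the very definition of the mixed Bochner norm $L^{p_2}(0,T;L^{p_1(s-1)}(\mathbb{T}^d))$ on the left and $L^{p_2}(0,T;\dot{B}_{p_1}^{\alpha,\infty}(\mathbb{T}^d))$ on the right.

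There is no real obstacle here; the only mild care is to verify measurability of $t\mapsto\|\nabla_x\theta^\eps(t,\cdot)\|_{L^{p_1(s-1)}(\mathbb{T}^d)}$ so that the integration step is rigorous. This is standard once one observes that, for fixed $\eps>0$, $\theta^\eps$ is smooth in the space variable and the map $t\mapsto\theta(t,\cdot)$ is strongly measurable into $\dot{B}_{p_1}^{\alpha,\infty}(\mathbb{T}^d)$ by hypothesis, hence also into $W^{1,p_1(s-1)}(\mathbb{T}^d)$ after convolution with $\eta^\eps$.
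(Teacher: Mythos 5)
Your proposal is correct and is exactly the argument the paper intends: the corollary is stated immediately after the phrase ``Integrating in time, we obtain,'' i.e.\ one applies Lemma~\ref{estfornab} at almost every time slice and then takes the $L^{p_2}$ norm in $t$, with the $\eps$-prefactor pulled out since it is time-independent. Your added remark on measurability is a harmless extra precaution not present in the paper.
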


Recall that a Fourier multiplier operator acts on a function $f:\mathbb{T}^d\to\mathbb{R}$ via
\begin{equation*}
\mathcal{T}f=\mathcal{F}^{-1}(m\hat f),
\end{equation*}
where $m:\mathbb{Z}^d\to\mathbb{C}$ is the symbol. The symbol is called an $L^p$-multiplier if $\mathcal{T}$ is a bounded operator from $L^p$ to itself. From the Fourier characterization of Besov spaces, one can see that in this case $\mathcal{T}$ is also bounded from a Besov space into itself. More precisely:

\begin{proposition}\label{boundedness}
Let $\mathcal{T}: L^p(\mathbb{T}^d)\to L^p(\mathbb{T}^d)$ be an operator with symbol $m(\xi)$. If
$m$ is an $L^p$- multiplier then $\mathcal{T}: \dot{B}_p^{\alpha,\infty}(\mathbb{T}^d)\to
\dot{B}_p^{\alpha,\infty}(\mathbb{T}^d)$ is a bounded operator for $0<\alpha<1$ and $1<p<\infty$.
\end{proposition}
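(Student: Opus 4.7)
The plan is to reduce the statement directly to the Littlewood--Paley characterization of $\dot{B}_p^{\alpha,\infty}$ given earlier in the preliminaries, using only that two Fourier multiplier operators commute. The key structural observation is that both $\mathcal{T}$ and $\Delta_k$ act on Fourier coefficients by pointwise multiplication, by $m(\xi)$ and $\varphi(\lambda_k^{-1}\xi)$ respectively. Since scalar multiplication on each Fourier mode is commutative, we get for every $k\geq 0$ and every $f\in L^p(\mathbb{T}^d)$ of zero mean
\begin{equation*}
\Delta_k(\mathcal{T}f)=\mathcal{F}^{-1}\bigl(\varphi(\lambda_k^{-1}\cdot)\, m\, \hat f\bigr)=\mathcal{T}(\Delta_k f).
\end{equation*}

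With this commutation in hand, the proof is essentially bookkeeping. By hypothesis $m$ is an $L^p$-multiplier, so there is a constant $C$ independent of $k$ with $\|\mathcal{T}g\|_{L^p}\leq C\|g\|_{L^p}$; applied to $g=\Delta_k f$ this gives
\begin{equation*}
\|\Delta_k(\mathcal{T}f)\|_{L^p}=\|\mathcal{T}(\Delta_k f)\|_{L^p}\leq C\|\Delta_k f\|_{L^p}.
\end{equation*}
Multiplying by $2^{k\alpha}$, taking the supremum over $k\geq 0$, and invoking the equivalent Besov norm recalled in the preliminaries yields
\begin{equation*}
\|\mathcal{T}f\|_{\dot{B}_p^{\alpha,\infty}}=\sup_{k\geq 0}2^{k\alpha}\|\Delta_k(\mathcal{T}f)\|_{L^p}\leq C\sup_{k\geq 0}2^{k\alpha}\|\Delta_k f\|_{L^p}=C\|f\|_{\dot{B}_p^{\alpha,\infty}},
\end{equation*}
which is the claim.

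There is no real obstacle; the restriction $1<p<\infty$ enters only to ensure that $L^p$-boundedness of $\mathcal{T}$ is a meaningful hypothesis and that the Littlewood--Paley projectors are themselves bounded on $L^p$ (so the characterization of the Besov norm applies); the restriction $0<\alpha<1$ plays no role in the argument beyond matching the range for which the homogeneous space $\dot{B}_p^{\alpha,\infty}(\mathbb{T}^d)$ has been set up in this paper. The only very mild technical point worth checking is that $\mathcal{T}$ commutes with $\Delta_k$ at the distributional level when $f$ is merely in $L^p$, but this is immediate from the Fourier series representation on $\mathbb{T}^d$ since both operators are diagonal in the Fourier basis.
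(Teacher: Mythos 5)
Your proof is correct and follows essentially the same route as the paper's: commute $\mathcal{T}$ with the Littlewood--Paley blocks $\Delta_k$, apply the $L^p$-multiplier bound blockwise, and take the supremum against the equivalent Besov norm. Your write-up is in fact slightly more explicit about why the commutation $\Delta_k\mathcal{T}=\mathcal{T}\Delta_k$ holds, which the paper simply asserts.
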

\begin{proof} Since $m$ is an $L^p$ multiplier,
\begin{equation*}
\|\mathcal{T}(2^{k\alpha}\Delta_k f)\|_{L^p}\leq C\|2^{k\alpha}\Delta_k
f\|_{L^p}.
\end{equation*}
Therefore, we obtain
\begin{equation*}
\sup_k\left\|\mathcal{T}(2^{k\alpha}\Delta_k
f)\right\|_{L^p}\leq C
\sup_k\left\|2^{k\alpha}\Delta_k
f\right\|_{L^p}
\end{equation*}
and thus
\begin{equation}\label{eq*}
\sup_k\left\|2^{k\alpha}\Delta_k
\mathcal{T}f\right\|_{L^p}\leq C
\sup_k\left\|2^{k\alpha}\Delta_k
f\right\|_{L^p},
\end{equation}
as $\mathcal{T}$ is linear and commutes with $\Delta_k$.
Hence we obtain
\begin{equation*}
\left\|\mathcal{T}f\right\|_{\dot{B}_p^{\alpha, \infty}}\leq
C\|f\|_{\dot{B}_p^{\alpha,\infty}}.
\end{equation*}
\end{proof}

\section{Analyticity}
Here we will prove a Lemma which shows that the integral appearing in the
weak formulation of the renormalized transport equation is an analytic function with respect to the renormalization exponent. For a complex number $z\in\mathbb{C}$, we write $\Re z$ for its real part. We also use the notation
\begin{equation*}
x_+:=
\begin{cases}
0, & \text{if } x\le 0,\\
x, &\text{if } x>0.
\end{cases}
\end{equation*}

\begin{lemma}\label{analyticity1}
If $\theta\in L_{loc}^{p_1}((0, T), L^p(\mathbb{T}^d))$,  with $p\ge p_1$ and
$u\in L_{loc}^{q_1}((0, T), L^q(\mathbb{T}^d))$,  with $q\ge q_1$,  $p_1>1$ and $\frac1{p_1}+\frac1{q_1}\leq1$,
then for any $\varphi\in C_c^\infty((0, T)\times\mathbb{T}^d)$
the function
\begin{equation}\nonumber
F(z)=-\int_{0}^{T}\int_{\mathbb{T}^d}
|\theta|^{z}\left(\partial_t\varphi+u\cdot\nabla\varphi\right) dxdt
\end{equation}  is an analytic function on the strip $0<\Re z<\frac{p_1}{q_1'},$
where $q_1'$ is the dual exponent to $q_1$ e.g.
$\frac1{q_1}+\frac1{q_1'}=1$. In particular, if  $\theta\in L_{loc}^{p_1}(0, T; \dot{B}_p^{\alpha, \infty}(\mathbb{T}^d))$,
$u\in L_{loc}^{q_1}(0, T; \dot{B}_q^{\beta, \infty}(\mathbb{T}^d))$,  with
$p_1\leq\frac{pd}{(d-p\alpha)_+}$, $q_1\leq\frac{qd}{(d-q\beta)_+}$, $\alpha>0$, and $\frac2{p_1}+\frac1{q_1}<1$,
then there exists a positive number $\gamma>0$ such that  $F$ is an analytic function on the strip
$0<\Re(z)<2+\gamma$.
\end{lemma}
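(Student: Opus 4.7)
The plan is to prove holomorphicity of $F$ via Morera's theorem, after first establishing absolute convergence and uniform boundedness of the double integral on compact subsets of the strip. Pointwise in $(x,t)$, the function $z \mapsto |\theta(x,t)|^z = e^{z\log|\theta(x,t)|}$ (with the convention $0^z:=0$ when $\Re z > 0$) is entire, and $\bigl||\theta|^z\bigr|=|\theta|^{\Re z}$. Once Fubini is justified by a uniform bound, interchanging $(x,t)$-integration with a contour integral over a closed triangle $\Gamma$ in the strip yields, via Cauchy's theorem,
\begin{equation*}
\oint_\Gamma F(z)\,dz = -\int_0^T\!\!\int_{\mathbb{T}^d}(\partial_t\varphi + u\cdot\nabla\varphi)\Bigl(\oint_\Gamma |\theta|^z\,dz\Bigr)dx\,dt = 0.
\end{equation*}
Morera's theorem then gives holomorphicity on the strip, and continuity is automatic from the dominated bound.

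The substantive task is therefore to estimate $\int_0^T\!\int_{\mathbb{T}^d}|\theta|^{\Re z}(|\partial_t\varphi|+|u||\nabla\varphi|)\,dx\,dt$ uniformly as $\Re z$ runs over a compact subinterval of $(0, p_1/q_1')$. The $\partial_t\varphi$-term is handled trivially: since $\Re z \le p_1 \le p$, the bound $|\theta|^{\Re z}\le 1+|\theta|^p$ together with the compact support of $\varphi$ suffices. For the convective term I apply H\"older first in space with conjugate exponents $(q',q)$ and then in time with conjugate exponents $\bigl(p_1/\Re z,\,p_1/(p_1-\Re z)\bigr)$, embedding $L^{q_1}\hookrightarrow L^{p_1/(p_1-\Re z)}$ on the finite interval supporting $\varphi$, to obtain
\begin{equation*}
\int_0^T\!\!\int_{\mathbb{T}^d}|\theta|^{\Re z}|u||\nabla\varphi|\,dx\,dt \le C\,\|\nabla\varphi\|_\infty\,\|\theta\|_{L^{p_1}(L^p)}^{\Re z}\,\|u\|_{L^{q_1}(L^q)}.
\end{equation*}
This requires $q'\Re z \le p$ (spatial) and $\Re z \le p_1/q_1'$ (temporal). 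The hypotheses $p\ge p_1$ and $q\ge q_1$ force $q'\le q_1'$ and hence $p/q'\ge p_1/q_1'$, so the temporal constraint is always the tighter one, and the admissible strip is exactly $0<\Re z<p_1/q_1'$.

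For the Besov refinement, the Sobolev-type embedding $\dot{B}_p^{\alpha,\infty}(\mathbb{T}^d)\hookrightarrow L^{\tilde p}(\mathbb{T}^d)$ for $\tilde p \le pd/(d-p\alpha)_+$ (interpreting $\tilde p=\infty$ when $p\alpha\ge d$) follows from Proposition~\ref{embedding} combined with the inclusion $\dot{B}_r^{\varepsilon,\infty}\hookrightarrow L^r$ for small $\varepsilon>0$; a negligible loss of regularity at the endpoint is absorbed by the strict final inequality. Consequently $\theta\in L^{p_1}_{loc}(L^{\tilde p})$ and $u\in L^{q_1}_{loc}(L^{\tilde q})$ with $\tilde p\ge p_1$ and $\tilde q\ge q_1$, and applying the first part of the lemma yields analyticity on $0<\Re z<p_1/q_1'$. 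The hypothesis $2/p_1+1/q_1<1$ rearranges to $p_1/q_1'>2$, so $\gamma:=p_1/q_1'-2>0$ delivers the claimed strip. The main obstacle in executing this plan is the careful bookkeeping of H\"older exponents --- confirming that under $p\ge p_1$ and $q\ge q_1$ the temporal constraint is indeed the binding one; the endpoint behavior of the Besov embedding is a secondary technicality absorbed by the strictness of the final inequality.
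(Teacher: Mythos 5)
Your proposal is correct, but the complex-analytic part runs along a genuinely different track from the paper's. The paper proves analyticity by differentiating under the integral sign: it writes down the candidate derivative $F'(z)=\int_0^T\int_{\mathbb{T}^d}|\theta|^z\log|\theta|\,(\partial_t\varphi+u\cdot\nabla\varphi)\,dx\,dt$, controls the logarithm via $|\log|y||\le C(\delta)(1+|y|)^{2\delta}|y|^{-\delta}$, splits into $\{|\theta|\ge1\}$ and $\{|\theta|\le1\}$, and shows this integral converges uniformly on each substrip $\Delta\le\Re z\le p_1/q_1'-\Delta$. Your Morera--Fubini route avoids the logarithm entirely and needs only absolute convergence of $F(z)$ itself, locally uniformly in $\Re z$; this is slightly cleaner, since the paper's $\delta$-perturbation of the exponents is precisely the price paid for the $\log$ factor, and you pay nothing. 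On the integrability side you are in fact more explicit than the paper, which at the decisive point merely asserts that ``the last integral converges'': your two-step H\"older with the verification that $p\ge p_1$ and $q\ge q_1$ make the temporal constraint $\Re z\le p_1/q_1'$ the binding one is exactly the bookkeeping the paper leaves implicit, and your treatment of the endpoint of the Besov embedding (absorbing the loss into the strict inequality $\frac{2}{p_1}+\frac{1}{q_1}<1$) is more careful than the paper's. One small slip to fix: for the $\partial_t\varphi$ term the pointwise bound $|\theta|^{\Re z}\le 1+|\theta|^p$ is true, but $|\theta|^p$ need not be space-time integrable when $p>p_1$, since only $\|\theta(t,\cdot)\|_{L^p}\in L^{p_1}_{loc}$ in time is assumed; use $1+|\theta|^{p_1}$ instead (still an upper bound because $\Re z<p_1/q_1'\le p_1$), which is integrable on the support of $\varphi$ after H\"older on the torus.
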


\begin{proof} 
We want to show that
\begin{equation*}
F'(z)=\int_{0}^{T}\int_{\mathbb{T}^d}|\theta(t, x)|^z\log|\theta(t, x)|\left(\partial_t\varphi+u\cdot\nabla\varphi\right)dxdt
\end{equation*} is a well-defined function.
Note that for any fixed $\delta>0$, there exists $C(\delta)>0$ such
that the inequality
\begin{equation*}
|\log |y||\leq C(\delta)(1+|y|)^{2\delta}|y|^{-\delta}
\end{equation*}
holds for any $y\in \mathbb{R}\setminus\{0\}$.
We have
\begin{equation}\label{derest}
|F'(z)|\leq C\int_{0}^{T}\int_{\mathbb{T}^d}|\theta(t, x)|^{\Re z}\left(1+|\theta(t, x)|\right)^{2\delta}|\theta(t, x)|^{-\delta}|\partial_t\varphi+u\cdot\nabla\varphi|dxdt.
\end{equation}
For $|\theta|\geq 1$, we have
$$
|\theta|^{\Re z-\delta}\left(1+|\theta|\right)^{2\delta}\leq 2^{2\delta}|\theta|^{\Re z+\delta},
$$
while, for $|\theta|\leq 1$, we obtain
$$
|\theta|^{\Re z-\delta}\left(1+|\theta|\right)^{2\delta}\leq 2^{2\delta}|\theta|^{\Re z-\delta},
$$
further, let $$z\in\{\Delta\leq\Re z\leq\frac{p_1}{q_1'}-\Delta\}\quad\text{for sufficiently small}\quad \Delta:=2\delta.$$
Using the above we have
\begin{equation*}
\begin{split}
|F'(z)|&\leq C\int_{0}^{T}\int_{\mathbb{T}^d}
|\theta(t, x)|^{\Re z}\left(1+|\theta(t, x)|\right)^{2\delta}|\theta(t, x)|^{-\delta}|\partial_t\varphi+u\cdot\nabla\varphi
|dxdt\\& \leq
C\int_{[0, T] \times\mathbb{T}^d\cap\{|\theta|\ge1\}}|\theta(t,
x)|^{\Re z+\delta}|\partial_t\varphi+u\cdot\nabla\varphi
    |dxdt\\&+C\int_{[0, T] \times\mathbb{T}^d\cap\{|\theta|\le1\}}|\theta(t,
x)|^{\Re z-\delta}|\partial_t\varphi+u\cdot\nabla\varphi
    |dxdt\\& \leq
C\int_{[0, T] \times\mathbb{T}^d\cap\{|\theta|\ge1\}}|\theta(t,
x)|^{\frac{p_1}{q_1'}-\Delta+\delta}|\partial_t\varphi+u\cdot\nabla\varphi
    |dxdt\\&+C\int_{[0, T] \times\mathbb{T}^d\cap\{|\theta|\le1\}}|\theta(t,
x)|^{\Delta-\delta}|\partial_t\varphi+u\cdot\nabla\varphi
    |dxdt\\& \leq
    C\int_{[0, T] \times\mathbb{T}^d}(|\theta(t,
x)|^{\delta}+|\theta(t,
x)|^{\frac{p_1}{q_1'}-\delta})|\partial_t\varphi+u\cdot\nabla\varphi
    |dxdt <\infty
\end{split}
\end{equation*}
and note that the last integral converges. Thus the integral for $F'(z)$ converges
uniformly in  $\{z\in \mathbb{C}: \Delta\leq\Re z\leq
\frac{p_1}{q_1'}-\Delta\}$ and so  there exists $F'(z)$ for any $z\in
\{0<\Re z<\frac{p_1}{q_1'}\}$. Consequently, $F(z)$ is an analytic
function on the strip $0<\Re z<\frac{p_1}{q_1'}$.

For the ``in particular" part of the lemma, observe that by Besov embedding (Proposition~\ref{embedding}), $\dot{B}_{p,\infty}^\alpha$ embeds continuously into $L^{r}$ with $r:=\frac{pd}{(d-p\alpha)_+}$, and likewise  $\dot{B}_{q,\infty}^\beta$ embeds continuously into $L^{s}$ with $s:=\frac{qd}{(d-q\beta)_+}$ (with the convention $r=\infty$ if the denominator is zero); by assumption, these exponents are not smaller than $p_1$ and $q_1$, respectively, and so the first part of the Lemma applies. Since 
\begin{equation*}
\frac{2}{p_1}+\frac{1}{q_1}<1,
\end{equation*} 
we obtain $\frac{p_1}{q_1'}>2$, and the conclusion follows.

\end{proof}

\section{Commutator estimates}\label{Ons}

\begin{theorem}\label{Thmain}
    Let $(\theta, u)$
    be a weak solution of \eqref{transport} on $(0, T)\times\mathbb{T}^d$. Assume that
    $\theta\in L^{p_1}(0, T; \dot{B}_p^{\alpha, \infty}(\mathbb{T}^d)),\quad u\in L^{p_2}(0, T; \dot{B}_q^{\beta, \infty}(\mathbb{T}^d))$
    with $2<p_1\le \frac{pd}{(d-p\alpha)_+}$, $p_2\le \frac{qd}{(d-q\beta)_+}$,
    $\frac{2}{p}+\frac{1}{q}=1$ and $\frac{2}{p_1}+\frac1{p_2}< 1$, where $1\leq p,q\leq\infty$ and $0<\alpha, \beta<1$. Let $\gamma>0$ and $2\alpha+\beta>1+\frac{d\gamma}{p}$. Then,
    for any $z\in S:=\{z\in\mathbb{C}:2<\Re z<2+\gamma\}$, the relation
    \begin{equation}\label{Th1}
    \partial_t|\theta|^z+\operatorname{div}_x(|\theta|^zu)=0
    \end{equation} holds in the sense of distributions on $(0, T)\times\mathbb{T}^d$.
    \end{theorem}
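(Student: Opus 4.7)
The plan is to run the Constantin-E-Titi mollification scheme with the complex renormalizing function $y\mapsto|y|^z$, which is $C^1(\mathbb{R})$ with derivative $z|y|^{z-2}y$ whenever $\Re z>2$. Set $R^\eps:=u^\eps\theta^\eps-(u\theta)^\eps$. Mollifying the transport equation and using $\operatorname{div} u=0$ gives $\partial_t\theta^\eps+\operatorname{div}(u^\eps\theta^\eps)=\operatorname{div} R^\eps$, and applying the chain rule to the $x$-smooth object $\theta^\eps$ yields
\begin{equation*}
\partial_t|\theta^\eps|^z+\operatorname{div}(u^\eps|\theta^\eps|^z)=z|\theta^\eps|^{z-2}\theta^\eps\operatorname{div} R^\eps.
\end{equation*}
Pairing with $\varphi\in C_c^\infty((0,T)\times\mathbb{T}^d)$ and integrating by parts on the right, the task reduces to showing that
\begin{equation*}
\int_0^T\int_{\mathbb{T}^d}\bigl[z(z-1)|\theta^\eps|^{z-2}\nabla\theta^\eps\,\varphi+z|\theta^\eps|^{z-2}\theta^\eps\,\nabla\varphi\bigr]\cdot R^\eps\,dx\,dt\longrightarrow 0
\end{equation*}
as $\eps\to 0$, while the left-hand side $-\int\int|\theta^\eps|^z(\partial_t\varphi+u^\eps\cdot\nabla\varphi)$ converges to the analogous expression without mollifiers.

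For $R^\eps$ I would use the standard identity $R^\eps(x)=\int\eta^\eps(y)(u(x-y)-u(x))(\theta(x-y)-\theta(x))\,dy-(u-u^\eps)(\theta-\theta^\eps)$, which by H\"older and the Besov seminorm definition yields $\|R^\eps(t)\|_{L^{p/(p-1)}(\mathbb{T}^d)}\le C\eps^{\alpha+\beta}\|\theta(t)\|_{\dot{B}_p^{\alpha,\infty}}\|u(t)\|_{\dot{B}_q^{\beta,\infty}}$ using $2/p+1/q=1$. Setting $s:=\Re z\in(2,2+\gamma)$, the dominant term $\int\int|\theta^\eps|^{s-2}|\nabla\theta^\eps||R^\eps|\varphi$ is handled by spatial H\"older with exponents $p(s-1)/(s-2),\ p(s-1),\ p/(p-1)$ (summing to $1$) and time H\"older with exponents $p_1/(s-2),\ p_1,\ (1/p_1+1/p_2)^{-1}$. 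Lemma~\ref{estfornab} bounds $\|\nabla\theta^\eps\|_{L^{p(s-1)}}\le C\eps^{\alpha-1-d(s-2)/(p(s-1))}\|\theta\|_{\dot{B}_p^{\alpha,\infty}}$, and the Young-inequality argument of Lemma~\ref{L1} (applied to $\theta^\eps$ in $L^{p(s-1)}$ rather than to $|\theta^\eps|^{s-1}$ in $L^p$) delivers $\||\theta^\eps|^{s-2}\|_{L^{p(s-1)/(s-2)}}\le C\eps^{-d(s-2)^2/(p(s-1))}\|\theta\|_{L^p}^{s-2}$. The three spatial $\eps$-powers collapse algebraically to $2\alpha+\beta-1-d(s-2)/p$, strictly positive by the hypothesis $2\alpha+\beta>1+d\gamma/p$ since $s<2+\gamma$. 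The time exponents sum to $s/p_1+1/p_2$, which is at most $1$ after shrinking $\gamma$ into the slack of $2/p_1+1/p_2<1$. The companion term without $\nabla\theta^\eps$, namely $\int\int|\theta^\eps|^{s-1}|R^\eps||\nabla\varphi|$, is controlled directly by Lemma~\ref{L1} and vanishes with strictly more room.

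For the left-hand side, the strong mollifier convergence $\theta^\eps\to\theta$ in $L^{p_1}(0,T;L^p(\mathbb{T}^d))$ together with a pointwise a.e.\ convergent subsequence yields, via dominated convergence, $|\theta^\eps|^z\to|\theta|^z$ in $L^{p_1/s}_tL^{p/s}_x$; the exponents are admissible once $\gamma$ is taken small enough that $s<\min(p,p_1)$. Combined with strong convergence of $u^\eps$ in $L^{p_2}(0,T;L^q(\mathbb{T}^d))$, this delivers the claimed distributional identity in the limit. The main obstacle, as already flagged in the introduction, is the commutator estimate of the second paragraph: because $|\theta^\eps|^{s-2}$ is unbounded and only controllable via Young's inequality at the cost of a negative power of $\eps$, the whole argument hinges on the precise algebraic cancellation producing the single exponent $2\alpha+\beta-1-d(s-2)/p$, whose positivity is exactly what forces the strip $\Re z<2+\gamma$ to be narrow.
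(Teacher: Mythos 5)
Your proposal follows essentially the same route as the paper: mollify in space, renormalize with $z|\theta^\eps|^{z-2}\theta^\eps$, split off the Constantin--E--Titi commutator, and control the singular factor $|\theta^\eps|^{s-2}$ by the Young-inequality bound of Lemma~\ref{L1} together with Lemma~\ref{estfornab}; your algebraic collapse of the three $\eps$-powers to $2\alpha+\beta-1-\frac{d(s-2)}{p}$ and your time-H\"older bookkeeping (including the need to shrink $\gamma$ into the slack of $\frac{2}{p_1}+\frac1{p_2}<1$) coincide with the paper's, the only cosmetic difference being that you bound the full commutator $R^\eps$ in $L^{p/(p-1)}$ at once where the paper estimates the two pieces $I$ and $J$ separately with the same exponents.

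One step as written would fail: in the limit passage for the left-hand side you pair $|\theta^\eps|^z\in L_x^{p/s}$ against $u^\eps\cdot\nabla\varphi$ with $u^\eps\in L_x^q$, but $\frac{s}{p}+\frac1q=1+\frac{s-2}{p}>1$ for every $s>2$, so this H\"older pairing is inadmissible no matter how small $\gamma$ is; the condition $s<\min(p,p_1)$ does not repair it. The missing ingredient is the Besov embedding $\dot{B}_p^{\alpha,\infty}\hookrightarrow L^{pd/(d-p\alpha)_+}$, which upgrades the spatial integrability of $\theta$ beyond $L^p$ and is exactly what Lemma~\ref{analyticity1} uses (via the hypotheses $p_1\le\frac{pd}{(d-p\alpha)_+}$ and $\frac{2}{p_1}+\frac1{q_1}<1$) to make $|\theta|^{\Re z}\left(|\partial_t\varphi|+|u||\nabla\varphi|\right)$ integrable on the strip; with that substitution your dominated-convergence argument closes.
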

\begin{proof}

By mollifying \eqref{transport} with respect to spatial variables, we obtain
\begin{equation}\label{(14)}
\partial_t\theta^{\eps}+\mbox{div}_x(\theta u)^{\eps}=0.
\end{equation}
If $\theta^{\eps}$ is a weak solution to equation \eqref{(14)}
then it has the weak derivative with respect to $t$. Moreover, $\partial_t\theta^{\eps}$ belongs to
$L^\frac{p_1p_2}{p_1+p_2}([0, T]\times\mathbb{T}^d)$ under the conditions of Theorem~\ref{Thmain}.

Note that if $\Re z>2$, then  $|\theta^{\eps}(t,\cdot)|^{z-1}\sgn\theta^{\eps}(t, \cdot)$ is  a continuously
differentiable function on $\mathbb{T}^d$ for a.e. $t\in (0, T)$.

Let $\varphi\in C_c^{\infty}\left((0, T)\times \mathbb{T}^d\right)$.
The multiplication of $\partial_t\theta^{\eps}$ with
$z|\theta^{\eps}|^{z-1} \sgn\theta^{\eps}\varphi$ is well-defined in the sense of distributions, owing to the
condition $\frac{2+\delta}{p_1}+\frac1{p_2}\le1$ for some positive
number $\delta>0$. Thus multiplication of \eqref{(14)} with
$z|\theta^{\eps}|^{z-1} \sgn\theta^{\eps}\varphi$ and integration
over time and space gives
\begin{equation*}
\int_{0}^{T}\int_{\mathbb{T}^d}\partial_t\theta^{\eps}
z|\theta^{\eps}|^{z-1}\sgn\theta^{\eps}\varphi
dxdt+\int_{0}^{T}\int_{\mathbb{T}^d}\mbox{div}_x(\theta
u)^{\eps}z|\theta^{\eps}|^{z-1}\sgn\theta^{\eps}\varphi dxdt=0.
\end{equation*}
Here we take $\eps>0$ small enough so that $\varepsilon_1\ge \varepsilon$, where $\varepsilon_1$ is chosen such that $\operatorname{supp} \varphi\subset
(\varepsilon_1, T-\varepsilon_1)\times\mathbb{T}^d$.
The previous equation can be written as
\begin{equation*}
\begin{gathered}
\int_{\eps_1}^{T-\eps_1}\int_{\mathbb{T}^d}\partial_t\theta^{\eps}
z|\theta^{\eps}|^{z-1}\sgn\theta^{\eps}\varphi
dxdt+\int_{\eps_1}^{T-\eps_1}\int_{\mathbb{T}^d}\mbox{div}_x(\theta^{\eps}
u^{\eps})z|\theta^{\eps}|^{z-1}\sgn\theta^{\eps}\varphi
dxdt\\=\int_{\eps_1}^{T-\eps_1}\int_{\mathbb{T}^d}\mbox{div}_x\left(\theta^{\eps}u^{\eps}-(\theta
u)^{\eps}\right)z|\theta^{\eps}|^{z-1}\sgn\theta^{\eps}\varphi
dxdt=:R^{\eps}.
\end{gathered}
\end{equation*}
Our goal is to prove that
\begin{equation*}
\lim_{\eps \to 0} R^{\eps}=0.
\end{equation*}
Fix $t\in (0, T)$ and consider the integral defined by
$$
R_1^{\eps}(t):=\int_{\mathbb{T}^d}\mbox{div}_x\left(\theta^{\eps}u^{\eps}-(\theta
u)^{\eps}\right)z|\theta^{\eps}|^{z-1}\sgn\theta^{\eps}\varphi dx.
$$
We can rewrite it as
\begin{equation*}
\begin{gathered}
R_1^{\eps}(t)=\int_{\mathbb{T}^d}\mbox{div}_x
\left((\theta^{\eps}(t, x)-\theta(t, x))(u^{\eps}(t, x)-u(t, x))\right)z|\theta^{\eps}|^{z-1}\sgn\theta^{\eps}\varphi(t, x)dx\\
-\int_{\mathbb{T}^d}\mbox{div}_x\bigg(\int_{[-\eps,
\eps]^d}\eta^{\eps}(\xi)[\theta(t,
x-\xi)-\theta(t,x)]\\\quad\times[u(t, x-\xi)-u(t,x)]d\xi \bigg)
z|\theta^{\eps}(t, x)|^{z-1}\sgn\theta^{\eps}(t, x)\varphi(t, x)
dx=:I(t)+J(t),
\end{gathered}
\end{equation*}
and estimate
\begin{equation*}
\begin{split}
|I(t)|&=\left|\int_{\mathbb{T}^d}\mbox{div}_x\left((\theta^{\eps}-\theta)(u^{\eps}-u)\right)z|\theta^{\eps}|^{z-1}\sgn\theta^{\eps}\varphi
dx\right|\\&=\left|\int_{\mathbb{T}^d}\left((\theta^{\eps}-\theta)(u^{\eps}-u)\right)\cdot\nabla\left(z|\theta^{\eps}|^{z-1}\sgn\theta^{\eps}\varphi\right)
dx\right|\\&\leq\left|\int_{\mathbb{T}^d}\left((\theta^{\eps}-\theta)(u^{\eps}-u)\right)\cdot\left(z|\theta^{\eps}|^{z-1}\sgn\theta^{\eps}\nabla\varphi\right)
dx\right|\\&\quad+\left|\int_{\mathbb{T}^d}\left((\theta^{\eps}-\theta)(u^{\eps}-u)\right)\cdot\nabla\left(z|\theta^{\eps}|^{z-1}\sgn\theta^{\eps}\right)\varphi
dx\right|\\&=:|I_1(t)|+|I_2(t)|.
\end{split}
\end{equation*}
We estimate $I_1(t)$ by the generalized H\"older
inequality
\begin{equation*}
\begin{split}
&|I_1(t)|\leq\left|\int_{\mathbb{T}^d}\left((\theta^{\eps}-\theta)(u^{\eps}-u)\right)\cdot\left(z|\theta^{\eps}|^{z-1}\sgn\theta^{\eps}\nabla\varphi\right)(t,x)
dx\right|\\&\leq|z|\|\varphi(t, \cdot)\|_{C^1}\|\theta^{\eps}(t,
\cdot)-\theta(t, \cdot)\|_{L^p(\mathbb{T}^d)}\|u^{\eps}(t,
\cdot)-u(t, \cdot)\|_{L^q(\mathbb{T}^d)}\left\||\theta^{\eps}(t,
\cdot)|^{\Re{z}-1}\right\|_{L^p(\mathbb{T}^d)}.
\end{split}
\end{equation*}
Now, we use the inequality
\begin{equation}\label{simbesov}
\|\theta^\eps(t, \cdot)-\theta(t, \cdot)\|_{L^p(\mathbb{T}^d)}\le
C\eps^\alpha \|\theta(t, \cdot)\|_{\dot{B}_p^{\alpha,
\infty}(\mathbb{T}^d)},
\end{equation}
which is stated e.g.\ in the paper \cite{constetiti}.
Similarly, we have
\begin{equation}\label{simbesov1}
\|u^\eps(t, \cdot)-u(t, \cdot)\|_{L^q(\mathbb{T}^d)}\le
C\eps^{\beta}\|u(t, \cdot)\|_{\dot{B}_q^{\beta, \infty}(\mathbb{T}^d)}.
\end{equation}
In view of Lemma \ref{L1}, \eqref{simbesov} and \eqref{simbesov1}, we
obtain
\begin{equation*}
\begin{split}
&|I_1(t)|\\
&\leq C|z|\|\varphi(t,
\cdot)\|_{C^1}\eps^{\alpha}\|\theta(t,
\cdot)\|_{\dot{B}_p^{\alpha,\infty}(\mathbb{T}^d)}\eps^{\beta}\|u(t,
\cdot)\|_{\dot{B}_q^{\beta,\infty}(\mathbb{T}^d)}\eps^{-\frac{d(\Re
z-2)}{p}} \|\theta(t,
\cdot)\|_{\dot{B}_p^{\alpha,\infty}(\mathbb{T}^d)}^{\Re{z}-1} \\&\leq
C|z|\|\varphi(t, \cdot)\|_{C^1}\|\theta(t,
\cdot)\|_{\dot{B}_p^{\alpha,\infty}(\mathbb{T}^d)}^{\Re z}\|u(t,
\cdot)\|_{\dot{B}_q^{\beta,\infty}(\mathbb{T}^d)}\eps^{\alpha+\beta-\frac{d(\Re
z-2)}{p}}.
\end{split}
\end{equation*}
Now, we integrate the last inequality
over $(0, T)$. Since $\operatorname{supp}\|\varphi(t, \cdot)\|_{C^1}\subset
(\varepsilon_1, T-\varepsilon_1)$, we have
\begin{equation}\nonumber
\int_0^{T}|I_1(t)|dt\leq
C|z|\|\varphi\|_{C^1}\eps^{\alpha+\beta-\frac{d(\Re
z-2)}{p}}\int_{\varepsilon_1}^{T-\varepsilon_1}\|\theta(t,
\cdot)\|_{\dot{B}_p^{\alpha,\infty}(\mathbb{T}^d)}^{\Re z}\|u(t,
\cdot)\|_{\dot{B}_q^{\beta,\infty}(\mathbb{T}^d)}dt.
\end{equation}
Since $2<\Re(z)<2+\gamma$ and $\frac{2+\gamma}{p_1}+\frac1{p_2}\le1$,
then by H\"older's inequality we get
\begin{equation}\nonumber
\begin{split}
\int_0^T|I_1(t)|dt&\leq
C|z|\|\varphi\|_{C^1}\eps^{\alpha+\beta-\frac{d(\Re
z-2)}{p}}\|\theta\|_{L^{p_1}(\varepsilon_1, T-\varepsilon_1;\,
\dot{B}_p^{\alpha,\infty}(\mathbb{T}^d))}^{\Re
z}\\&\quad\times\|u\|_{L^{p_2}(\varepsilon_1, T-\varepsilon_1;\,
\dot{B}_q^{\beta,\infty}(\mathbb{T}^d))}.
\end{split}
\end{equation}
Now, we estimate $I_2(t)$ for fixed $t$:
\begin{equation*}
\begin{split}
|I_2(t)|&\leq|z^2-z|\|\varphi(t,
\cdot)\|_C\int_{\mathbb{T}^d}|(\theta^{\eps}-\theta)(t,
x)||(u^{\eps}-u)(t, x)||\theta^{\eps}(t, x)|^{\Re z-2}|\\&\quad\times|\nabla
\theta^{\eps}(t, x)|dx\leq|z^2-z|\|\varphi(t,
\cdot)\|_C\|(\theta^{\eps}-\theta)(t,
\cdot)\|_{L^p(\mathbb{T}^d)}\\&\quad\times\|(u^{\eps}-u)(t,
\cdot)\|_{L^q(\mathbb{T}^d)}\left\||\theta^{\eps}|^{\Re z-2}|\nabla
\theta^{\eps}|\right\|_{L^p(\mathbb{T}^d)}.
\end{split}
\end{equation*}
By H\"older's inequality we obtain
\begin{equation*}
\begin{split}
\left\||\theta^{\eps}(t, \cdot)|^{\Re z-2}|\nabla \theta^{\eps}(t,
\cdot)|\right\|&_{L^p(\mathbb{T}^d)}=\left(\int_{\mathbb{T}^d}|\theta^{\eps}(t,
x)|^{p(\Re z-2)}|\nabla \theta^{\eps}(t,
x)|^pdxdt\right)^{\frac{1}{p}}\leq\\&
\left(\left\|\left|\theta^{\eps}(t, \cdot)\right|^{p(\Re
z-2)}\right\|_{L^\frac{\Re z-1}{\Re
z-2}(\mathbb{T}^d)}\left\|\left|\nabla\theta^{\eps}(t,
\cdot)\right|^p\right\|_{L^{\Re
z-1}(\mathbb{T}^d)}\right)^\frac1{p}\\&=\left\|\theta^{\eps}(t,
\cdot)\right\|_{L^{p(\Re z-1)}(\mathbb{T}^d)}^{\Re
z-2}\left\|\nabla\theta^{\eps}(t, \cdot)\right\|_{L^{p(\Re
z-1)}(\mathbb{T}^d)}.
\end{split}
\end{equation*}
We take $s:=\Re z$ in Lemma \ref{estfornab} to deduce
\begin{equation}\label{ine2}
\left\|\theta^{\eps}(t, \cdot)\right\|_{L^{p(\Re
z-1)}(\mathbb{T}^d)}^{\Re{z}-2}\leq C(p)\eps^{-\frac{d(\Re
z-2)^2}{p(\Re z-1)}}\|\theta(t,
\cdot)\|_{L^p(\mathbb{T}^d)}^{\Re{z}-2}.
\end{equation}
Combining \eqref{estnabth} and \eqref{ine2}, we obtain
\begin{equation*}
\left\||\theta^{\eps}(t, \cdot)|^{\Re z-2}|\nabla \theta^{\eps}(t,
\cdot)|\right\|_{L^p(\mathbb{T}^d)}\leq
C(p)\eps^{\alpha-1-\frac{d(\Re z-2)}{p}}\|\theta(t,
\cdot)\|_{\dot{B}_p^{\alpha,\infty}(\mathbb{T}^d)}^{\Re z-1}.
\end{equation*}
Thus, combining all obtained estimates we get
\begin{equation}\nonumber
\begin{split}
|I_2(t)|&\leq
C|z||z-1|\|\varphi\|_{C}\eps^{\alpha}\eps^{\beta}\|u(t,
\cdot)\|_{\dot{B}_q^{\beta,\infty}(\mathbb{T}^d)}\eps^{\alpha-1-\frac{d(\Re
z-2)}{p}}\|\theta(t,
\cdot)\|_{\dot{B}_p^{\alpha,\infty}(\mathbb{T}^d)}^{\Re{z}}\\& =
C|z||z-1|\|\varphi\|_{C}\|\theta(t,
\cdot)\|_{\dot{B}_p^{\alpha,\infty}(\mathbb{T}^d)}^{\Re z}\|u(t,
\cdot)\|_{\dot{B}_q^{\beta,\infty}(\mathbb{T}^d)}\eps^{2\alpha+\beta-1-\frac{d(\Re
z-2)}{p}}.
\end{split}
\end{equation}
Finally, integrating the last inequality
over $(0, T)$ similarly to the estimate for $I_1$, we arrive at
\begin{equation}\nonumber
\begin{split}
\int_0^T|I_2(t)|dt&\leq
C|z||z-1|\|\varphi\|_{C}\eps^{2\alpha+\beta-1-\frac{d(\Re
z-2)}{p}}\|\theta\|_{L^{p_1}(\varepsilon_1, T-\varepsilon_1;\,
\dot{B}_p^{\alpha,\infty}(\mathbb{T}^d))}^{\Re
z}\\&\quad\times\|u\|_{L^{p_2}(\varepsilon_1, T-\varepsilon_1;\,
\dot{B}_q^{\beta,\infty}(\mathbb{T}^d))}.
\end{split}
\end{equation}
Hence we obtain the estimate for $I(t)$:
\begin{equation}\label{firstteri}
\begin{split}
\int_0^T|I(t)|dt&\leq
C(z)\|\varphi\|_{C^1}\eps^{2\alpha+\beta-1-\frac{d(\Re
z-2)}{p}}\\&\quad\times\|\theta\|_{L^{p_1}([\varepsilon_1, T-\varepsilon_1],\,
\dot{B}_p^{\alpha,\infty}(\mathbb{T}^d))}^{\Re
z}\|u\|_{L^{p_2}([\varepsilon_1, T-\varepsilon_1],\,
\dot{B}_q^{\beta,\infty}(\mathbb{T}^d))}.
\end{split}
\end{equation}
Now, we consider an estimate for the second term $J(t)$:
\begin{equation*}
\begin{split}
|J(t)|&=\bigg|\int_{\mathbb{T}^d}\mbox{div}_x\bigg(\int_{[-\eps,
\eps]^d}\eta^{\eps}(\xi)[\theta(t, x-\xi)-\theta(t,x)]\\&\quad
\times[u(t, x-\xi)-u(t,x)]d\xi\bigg) z|\theta^{\eps}(t,
x)|^{z-1}\sgn\theta^{\eps}(t, x)\varphi(t, x) dx\bigg|\\&=
\bigg|\int_{\mathbb{T}^d}\int_{[-\eps,
\eps]^d}\eta^{\eps}(\xi)[\theta(t, x-\xi)-\theta(t,x)]\\&\quad\times
[u(t, x-\xi)-u(t,x)]d\xi \cdot \nabla_x\left(z|\theta^{\eps}(t,
x)|^{z-1}\sgn\theta^{\eps}(t, x)\varphi(t, x)\right) dx\bigg|\\&
\leq\bigg|\int_{\mathbb{T}^d}\int_{[-\eps,
\eps]^d}\eta^{\eps}(\xi)[\theta(t, x-\xi)-\theta(t,x)]\\&\quad\times
[u(t, x-\xi)-u(t,x)]d\xi  \cdot
\left(z|\theta^{\eps}|^{z-1}\sgn\theta^{\eps}\nabla_x\varphi\right)
dx\bigg|\\&\quad+\bigg|\int_{\mathbb{T}^d}\int_{[-\eps,
\eps]^d}\eta^{\eps}(\xi)[\theta(t, x-\xi)-\theta(t,x)]\\&\quad\times
[u(t, x-\xi)-u(t,x)]d\xi \cdot
\left(\nabla_xz|\theta^{\eps}|^{z-1}\sgn\theta^{\eps}\right)\varphi
dx\bigg|\\&=:|J_1(t)|+|J_2(t)|.
\end{split}
\end{equation*}
We estimate $J_1(t)$ by using the Cauchy-Schwarz  and
generalized H\"older inequalities:
\begin{equation*}
\begin{split}
|J_1(t)|&\leq |z|\|\varphi(t,
\cdot)\|_{C^1}\int_{\mathbb{T}^d}\Big|\int_{[-\eps,
\eps]^d}\eta^{\eps}(\xi)[\theta(t, x-\xi)-\theta(t,x)]\\&
\quad\times [u(t, x-\xi)-u(t,x)]d\xi\Big| |\theta^{\eps}(x,
t)|^{z-1}dx\\&\leq|z| \|\varphi(t,
\cdot)\|_{C^1}\int_{\mathbb{T}^d}\int_{[-\eps,
 \eps]^d}\eta^{\eps}(\xi)|\theta(t,
x-\xi)-\theta(t,x)|\\& \quad\times |u(t, x-\xi)-u(t,x)|d\xi
|\theta^{\eps}(x, t)|^{\Re z-1}dx\\& \leq|z|\|\varphi(t,
\cdot)\|_{C^1}\int_{\mathbb{T}^d}\eta^{\eps}(\xi)\|\theta(t,
\cdot-\xi)-\theta(t,\cdot)\|_{L^p(\mathbb{T}^d)}\\& \quad\times
\|u(t, \cdot-\xi)-u(t,\cdot)\|_{L^q(\mathbb{T}^d)}
\left\||\theta^{\eps}(t, \cdot)|^{\Re
z-1}\right\|_{L^p(\mathbb{T}^d)}d\xi.
\end{split}
\end{equation*}
Since $\theta\in \dot{B}_p^{\alpha, \infty}$, $u\in \dot{B}_q^{\beta, \infty}$ for a.e.\ $t\in [0, T]$,
we have
\begin{equation*}
\|\theta(t, \cdot-\xi)-\theta(t,\cdot)\|_{L^p(\mathbb{T}^d)}\leq
|\xi|^\alpha\|\theta(t, \cdot)\|_{\dot{B}_p^{\alpha,
\infty}(\mathbb{T}^d)}
\end{equation*}
and
\begin{equation*}
\|u(t,
\cdot-\xi)-u(t,\cdot)\|_{L^q(\mathbb{T}^d)}\leq|\xi|^\beta\|u(t,
\cdot)\|_{\dot{B}_p^{\beta, \infty}(\mathbb{T}^d)}.
\end{equation*}
Observe that
\begin{equation*}
\int_{[-\eps, \eps]^d} \eta^{\eps}(\xi)|\xi|^{\alpha+\beta}d\xi=C
\eps^{\alpha+\beta}
\end{equation*}
as $\operatorname{supp}\eta^{\eps}\subset\{\xi\in \mathbb{R}^d: |\xi|<\eps\}$. Hence,
we have
\begin{equation*}
|J_1(t)|\leq C|z|\|\varphi(t, \cdot)\|_{C^1}\|\theta\|_{\dot{B}_p^{\alpha,
\infty}}^{\Re z}\|u\|_{\dot{B}_q^{\beta,
\infty}}\eps^{\alpha+\beta-\frac{d(\Re z-2)}{p}}.
\end{equation*}
Then we integrate the last inequality over $[0, T]$:
$$
\int_0^T|J_1(t)|dt\leq
C|z|\|\varphi\|_{C^1}\eps^{\alpha+\beta-\frac{d(\Re
z-2)}{p}}\int_{\varepsilon_1}^{T-\varepsilon_1}\|\theta(t,
\cdot)\|_{\dot{B}_p^{\alpha,\infty}(\mathbb{T}^d)}^{\Re z}\|u(t,
\cdot)\|_{\dot{B}_q^{\beta, \infty}(\mathbb{T}^d)}dt.
$$
The last integral converges.
Similarly, we estimate $J_2(t)$:
\begin{equation*}
\begin{split}
|J_2(t)|&=\bigg|\int_{\mathbb{T}^d}\int_{[-\eps,
\eps]^d}\eta^{\eps}(\xi)[\theta(t, x-\xi)-\theta(t,x)]\\&\quad\times
[u(t, x-\xi)-u(t,x)]d\xi  \cdot
\left(z(z-1)|\theta^{\eps}|^{z-2}\nabla_x \theta^{\eps}(t,
x)\right)\varphi dx\bigg|\\&
\leq|z||z-1|\|\varphi\|_{C}\int_{\mathbb{T}^d}\Big|\int_{[-\eps,
\eps]^d}\eta^{\eps}(\xi)[\theta(t, x-\xi)-\theta(t,x)]\\&
\quad\times [u(t, x-\xi)-u(t,x)]d\xi \cdot \nabla\theta^{\eps}(t,
x)|\theta^{\eps}(t, x)|^{z-2}\Big|dx\\&\leq|z||z-1|
\|\varphi\|_{C}\int_{\mathbb{T}^d}\int_{[-\eps,
\eps]^d}\eta^{\eps}(\xi)|\theta(t, x-\xi)-\theta(t,x)|\\&
\quad\times |u(t, x-\xi)-u(t,x)|d\xi |\theta^{\eps}(t, x)|^{\Re
z-2}|\nabla \theta^{\eps}(t, x)|dx\\&
\leq|z||z-1|\|\varphi\|_{C}\int_{[-\eps,
\eps]^d}\eta^{\eps}(\xi)\|\theta(t,
\cdot-\xi)-\theta(t,\cdot)\|_{L^p(\mathbb{T}^d)}\\& \quad\times
\|u(t, \cdot-\xi)-u(t,\cdot)\|_{L^q(\mathbb{T}^d)}
\left\||\theta^{\eps}(t, \cdot)|^{\Re z-2}|\nabla\theta^{\eps}(t,
\cdot)|\right\|_{L^p(\mathbb{T}^d)}d\xi.
\end{split}
\end{equation*}
Using the previous inequalities and Lemma \ref{estfornab}, we obtain
\begin{equation}\nonumber
|J_2(t)|\leq C|z||z-1|\|\varphi\|_{C}\left\|\theta(t,
\cdot)\right\|_{\dot{B}_p^{\alpha,\infty}}^{\Re z}\|u(t,
\cdot)\|_{\dot{B}_q^{\beta, \infty}}\eps^{2\alpha+\beta-1-\frac{d(\Re
z-2)}{p}}.
\end{equation}
Hence,
\begin{equation}\nonumber
\begin{split}
\int_0^T|J_2(t)|dt&\leq
C|z||z-1|\|\varphi\|_{C}\left\|\theta\right\|_{L^{p_1}([\eps_1, T-\eps_1],
\dot{B}_p^{\alpha,\infty}(\mathbb{T}^d))}^{\Re z}\\&\quad\times\|u\|_{L^{p_2}([\eps_1, T-\eps_1],
\dot{B}_q^{\beta, \infty}(\mathbb{T}^d))}\eps^{2\alpha+\beta-1-\frac{d(\Re
z-2)}{p}}.
\end{split}
\end{equation}
Thus
\begin{equation}\nonumber
\begin{split}
\int_0^T|J(t)|dt&\leq
C(z)\|\varphi\|_{C^1}\left\|\theta\right\|_{L^{p_1}([\eps_1, T-\eps_1],
\dot{B}_p^{\alpha,\infty}(\mathbb{T}^d))}^{\Re z}\\&\quad\times\|u\|_{L^{p_2}([\eps_1, T-\eps_1],
\dot{B}_q^{\beta, \infty}(\mathbb{T}^d))}\eps^{2\alpha+\beta-1-\frac{d(\Re
z-2)}{p}}.
\end{split}
\end{equation}
In other words,
\begin{equation}\nonumber
\begin{split}
&\left|\int_0^T\int_{\mathbb{T}^d}\partial_t\theta^{\eps}
z|\theta^{\eps}|^{z-1}\sgn\theta^{\eps}\varphi
-\mbox{div}_x(\theta^{\eps}
u^{\eps})z|\theta^{\eps}|^{z-1}\sgn\theta^{\eps}\varphi dxdt\right|\\ 
\le& C(z)\|\varphi\|_{C^1}\left\|\theta\right\|_{L^{p_1}([\eps_1, T-\eps_1],
\dot{B}_p^{\alpha,\infty}(\mathbb{T}^d))}^{\Re z}\|u\|_{L^{p_2}([\eps_1, T-\eps_1],
\dot{B}_q^{\beta, \infty}(\mathbb{T}^d))}\eps^{2\alpha+\beta-1-\frac{d(\Re
z-2)}{p}}.
\end{split}
\end{equation}
In particular, since $\Re z< 2+\gamma$, then
\begin{equation}\nonumber
\int_0^T\int_{\mathbb{T}^d}\partial_t\theta^{\eps}
z|\theta^{\eps}|^{z-1}\sgn\theta^{\eps}\varphi
-\mbox{div}_x(\theta^{\eps}
u^{\eps})z|\theta^{\eps}|^{z-1}\sgn\theta^{\eps}\varphi dxdt=o(1)
\end{equation}
as $\eps\to+0$, whenever $2\alpha+\beta>1+\frac{d\gamma}{p}$.
Since $\theta^{\eps}(t, \cdot), \, u^{\eps}(t, \cdot)\in
C^\infty(\mathbb{T}^d)$ for a.e.\ $t\in [0, T]$ and $\mbox{div}_x(u^{\eps}(t, \cdot))=0$, we have
$$
\mbox{div}_x(\theta^{\eps} u^{\eps})z|\theta^{\eps}|^{z-1}\sgn
(\theta^{\eps})=\mbox{div}_x(|\theta^{\eps}|^z u^{\eps}).
$$
Consequently, integration by parts yields
\begin{equation*}
\begin{split}
\int_{\mathbb{T}^d}\mbox{div}_x(\theta^{\eps}
u^{\eps})z|\theta^{\eps}|^{z-1}\sgn\theta^{\eps}\varphi
dx& =\int_{\mathbb{T}^d}\mbox{div}_x(|\theta^{\eps}|^z
u^{\eps})\varphi dx\\&=-\int_{\mathbb{T}^d}|\theta^{\eps}|^z
u^{\eps}\cdot\nabla \varphi dx.
\end{split}
\end{equation*}
On the other hand, by the chain rule it holds that
\begin{equation*}
\int_0^T\int_{\mathbb{T}^d}\partial_t\theta^{\eps}
z|\theta^{\eps}|^{z-1}\sgn\theta^{\eps}\varphi
dxdt-\int_0^T\int_{\mathbb{T}^d}\partial_t|\theta^{\eps}|^z\varphi
dxdt=0.
\end{equation*}
Therefore,
\begin{equation*}
\begin{gathered}\label{weakform}
\int_0^T\int_{\mathbb{T}^d}\partial_t\theta^{\eps}
z|\theta^{\eps}|^{z-1}\sgn(\theta^{\eps})\varphi dxdt+
\int_0^T\int_{\mathbb{T}^d}\mbox{div}_x(\theta^{\eps} u^{\eps})z|\theta^{\eps}|^{z-1}\sgn\theta^{\eps}\varphi dxdt\\
=\int_0^T\int_{\mathbb{T}^d}\partial_t |\theta^{\eps}|^{z}\varphi dxdt+\int_0^T\int_{\mathbb{T}^d}\mbox{div}_x(|\theta^{\eps}|^z u^{\eps})\varphi dxdt\\
=-\int_0^T\int_{\mathbb{T}^d} |\theta^{\eps}|^{z}\partial_t\varphi dxdt-\int_0^T\int_{\mathbb{T}^d}|\theta^{\eps}|^zu^{\eps}\cdot\nabla\varphi dxdt\\
=-\int_0^T\int_{\mathbb{T}^d}
|\theta^{\eps}|^{z}\left(\partial_t\varphi+u^\eps\cdot\nabla\varphi\right)
dxdt=:F_{\eps}(z).
\end{gathered}
\end{equation*}
From the above estimates that we obtained, we have
\begin{equation*}
\lim_{\eps\to 0}F_\eps(z)=0.
\end{equation*}
On the other hand, as $\eps\to 0$,
$|\theta^{\eps}|^{z}\to|\theta|^{z}$ and $u^\eps \to u$, and thus
\begin{equation*}
\lim_{\eps\to 0}F_\eps(z)=F(z),
\end{equation*}
where $$F(z)=-\int_{0}^{T}\int_{\mathbb{T}^d}
|\theta|^{z}\left(\partial_t\varphi+u\cdot\nabla\varphi\right)
dxdt.$$
\end{proof}

Note that if we set the test function
$\varphi(t, x)=\phi(t)$ to depend only on the variable $t$, then we
obtain the following corollary on the \emph{global} conservation of $L^p$ norms:

\begin{corollary}\label{cor:con.en}
Let a pair $\theta$ be a weak solution of the transport problem
\eqref{transport} on $(0, T)\times \mathbb{T}^d$. Assume that
\begin{equation*}
\theta\in \dot{B}_p^{\alpha, \infty},\quad u\in \dot{B}_q^{\beta,
\infty}\quad\text{for some}\quad 1\leq p,q\leq\infty\quad\text{and}\quad 0<\alpha,\beta<1
\end{equation*}  such that $\frac{2}{p}+\frac{1}{q}=1$. Let $\gamma>0$ and assume $2\alpha+\beta>1+\frac{d\gamma}{p}$. Then, for any $z\in\{z\in\mathbb{C}:\quad 2<\Re z<2+\delta\}$,
the total conservation of $L^z$ norm of $\theta$ is valid, that
is,
\begin{equation}\label{Th1}
\int_{0}^{T}\phi'(t)\int_{\mathbb{T}^d} |\theta|^{z}dxdt=0.
\end{equation} 
\end{corollary}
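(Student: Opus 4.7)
The plan is to deduce the corollary as a direct specialization of Theorem~\ref{Thmain} to a test function that is independent of the spatial variable. Given $\phi\in C_c^\infty(0,T)$, I would set $\varphi(t,x):=\phi(t)$. Since $\mathbb{T}^d$ is compact and has no boundary, this $\varphi$ belongs to $C_c^\infty((0,T)\times\mathbb{T}^d)$ and is therefore an admissible test function for the distributional identity furnished by Theorem~\ref{Thmain}.

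Unpacking that identity against this particular $\varphi$, I obtain
\begin{equation*}
-\int_0^T\int_{\mathbb{T}^d}|\theta|^z\bigl(\partial_t\varphi+u\cdot\nabla_x\varphi\bigr)\,dx\,dt=0.
\end{equation*}
Because $\varphi$ is independent of $x$, the spatial gradient $\nabla_x\varphi$ vanishes identically, so the advective term drops out. What remains is exactly
\begin{equation*}
\int_0^T\phi'(t)\int_{\mathbb{T}^d}|\theta(t,x)|^z\,dx\,dt=0,
\end{equation*}
up to an overall sign, which is the conclusion.

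There is essentially no new obstacle to overcome here: all of the analytical labor — the commutator estimates, the careful control of $|\theta^\eps|^{\Re z-2}$ when $\Re z$ is only slightly larger than $2$, and the passage to the limit $\eps\to 0$ — has already been absorbed into the proof of Theorem~\ref{Thmain}. The corollary is merely the specialization of the local renormalized equation to spatially constant test functions, which converts the local conservation law into a statement about the time evolution of the integrated quantity $\int_{\mathbb{T}^d}|\theta|^z\,dx$. The one qualitative remark worth making is that the admissibility of a test function constant in $x$ uses the compactness and boundarylessness of $\mathbb{T}^d$; on $\R^d$ an additional spatial cutoff and localization argument would be required, but here it is not.
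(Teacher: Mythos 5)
Your proposal is correct and coincides with the paper's own (very brief) argument: the corollary is obtained precisely by testing the distributional identity of Theorem~\ref{Thmain} with $\varphi(t,x)=\phi(t)$, so that $\nabla_x\varphi\equiv 0$ kills the advective term. Your added remark that such a spatially constant test function is admissible only because $\mathbb{T}^d$ is compact and boundaryless is a sensible observation, but otherwise there is nothing to add.
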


\section{Conclusion of main results}

Using Lemma \ref{analyticity1}, we know that $$F(z):=\int_{0}^{T}\int_{{\mathbb{T}}^d}(\partial_t\varphi(t)+u\cdot\nabla\varphi)|\theta|^zdxdt$$ is an analytic function on $1<\Re z<r$, where $r$ is any exponent for which $|\theta|^ru$ is locally integrable in $(0,T)\times\mathbb{T}^d$. By Theorem \ref{Thmain}, $F=0$ on an open set, so that Theorem~\ref{mainth} follows from the unique continuation principle for analytic functions.

 Theorem~\ref{mainthactive} then immediately follows by Proposition \ref{boundedness}.

\end{document}